\documentclass[reqno]{amsart}
\usepackage{graphicx,amsaddr,amsmath,amsfonts,amssymb,bm,amsthm,mathtools} 
\newtheorem{definition}{Definition}
\newtheorem{theorem}{Theorem}
\newtheorem{lemma}{Lemma}
\newtheorem{problem}{Problem}
\newtheorem{corollary}{Corollary}
\newtheorem{proposition}{Proposition}
\newtheorem{example}{Example}

\DeclareMathOperator*{\argmin}{arg\,min}

\title{Asymptotics of solutions to the linear search problem}
\author{Robin A.\ Heinonen}
\address{MaLGa \& DICCA, University of Genoa, Genoa, IT}
\date{\today}

\begin{document}

\begin{abstract}
    The exact leading asymptotics of solutions to the symmetric linear search problem are obtained for any positive probability density on the real line with a monotonic, sufficiently regular tail. A similar result holds for densities on a compact interval. 
\end{abstract}

\maketitle

\section{Introduction}
How should one move on a line in order to find, in minimum expected time, an unseen target whose position was drawn from a known probability density? This is the question asked by the classical ``linear search problem'' (LSP) \cite{bellman1963,beck1964,bruss1988,alpern2003}, introduced independently by R.\ Bellman and A.\ Beck in the early 1960s. 

In the literature, a large share of the attention on the LSP has been focused on the case where the probability density is symmetric about the searcher's starting point. In such cases, it is not hard to see that the optimal trajectory zigzags back and forth across the starting point, so that the trajectory may be parametrized by a discrete sequence of \emph{turning points} $\{ x_k\}$ with alternating sign. With this in mind and with no loss of generality, we will, throughout this article, denote the turning points by a sequence of \emph{nonnegative} real numbers $\{x_k\}$ (i.e.\ the moduli of the turning points), with the starting point $x_0$ set to 0.

Searching under uncertainty is a central task in many scientific areas--- biology, robotics, computer science, and game theory, to name a few---and the LSP is one of the simplest, most fundamental search problems that can be devised.\footnote{The LSP is also very closely related to the 2-lane cow-path problem in computer science \cite{kao1996}.} However, despite its simplicity, the LSP is more difficult than it may seem at first glance, and relatively little is known in general about optimal search strategies. For example, no known algorithm can compute the optimal strategy for general probability densities (although dynamic programming can be used to compute an $\epsilon$-optimal solution for any desired accuracy $\epsilon$ \cite{alpern2003}).

Moreover, to our knowledge, there are essentially no quantitative results about the optimal strategies for general probability densities. Most of what \emph{is} known may be summarized by a few special cases, studied by A.\ Beck (1984,1986) \cite{beck1984,beck1986}:
\begin{itemize}
    \item the uniform distribution on a symmetric, compact interval, wherein the optimal strategy is simply to visit the endpoints;
    \item the triangular distribution on the same interval, whose optimal trajectory never terminates (and for which some numerical values of the $x_k$ were computed);
    \item and the normal distribution, for which we have $x_k \sim \sqrt{2 k \log k}.$
\end{itemize}

In the present work, we close this latter gap definitively by rigorously establishing asymptotics for the optimal turning points for \emph{any} (eventually) monotonic, nonzero density on the real line, under reasonable regularity assumptions. The result may be expressed compactly in terms of the hazard function associated with the target probability density. A similar result can be used to evaluate the asymptotics for many densities on compact intervals. In the cases of both bounded and unbounded domains, we also establish rigorously the asymptotics of the turning points in the case where $p$ decays like a power law (up to undetermined constants), which will serve as an important edge case. 

The remainder of the paper is organized as follows. In Sec.~\ref{sec:problem}, we state the LSP precisely along with some useful, well-known results and a few necessary definitions. Then, in Sec.~\ref{sec:results} we state our main results. A few interesting special cases are presented in Sec.~\ref{sec:examples}, including the aforementioned triangular distribution and the normal distribution (as a special case of general stretched/compressed-exponential tails). Finally, the main results are proved in Secs.~\ref{sec:thm1}--\ref{sec:prop5}, and we conclude with a brief discussion in Sec.~\ref{sec:discussion}.

\section{Preliminaries}\label{sec:problem}
\subsection{Problem statement and known results}
Formally, the symmetric LSP may be stated as follows:
\begin{problem}[Symmetric linear search problem] \label{prob:lsp}
Fix a probability density $p(x)$ on $\mathbb{R}$ such that $p$ is even (i.e., $p(x) = p(-x)$ for all $x\in \mathbb{R}$). Select a target $x^* \sim p.$ Let $\Gamma_1$ be the set of continuous, piecewise $\mathcal{C}^1,$ unit-speed curves $\gamma:[0,\infty) \to \mathbb{R}$ with $\gamma(0) = 0.$ Find $\gamma^* := \argmin_{\gamma \in \Gamma_1} \mathbb{E}_{x^* \sim p}[\inf \{ t: \gamma(t) = x^*\}].$
\end{problem}

 Much of what we know about Problem~\ref{prob:lsp} was established by A.\ Beck in a long series of colorfully-named papers \cite{beck1964,beck1965,beck1970,beck1973,beck1984,beck1986,beck1992}.\footnote{``The linear search problem: electric boogaloo'' was a working title for the present manuscript.} For one, as previously stated, the candidate $\gamma$ may be parametrized by a \emph{discrete} sequence of turning points $\{x_k\}_{k\ge 0},$ and WLOG we can take $x_0=0$ and $x_k \ge0$ for all $k\ge1$. We also have the following important, well-known facts, which we state without proof:

\begin{proposition}\label{prop:lspfacts}
Let $\{x_k\}$ be a sequence of turning points parametrizing a minimizing search strategy for Problem~\ref{prob:lsp}. Then the following hold:
\hfill
\begin{enumerate}
        \item Such a minimizing strategy $\{x_k\}$ exists if and only if $\int_0^\infty x p(x) \, dx < \infty.$
    \item Define the survival function $G(x) = \int_x^\infty p(t) \, dt.$ Then $\{x_k\}$ minimizes the objective function 
    \begin{equation}\label{eq:objective}
        J[x] := \sum_{k = 1}^\infty x_k ( G(x_k) + G(x_{k-1})) \tag{Obj}
    \end{equation} and in particular we have (by differentiating) the recurrence
    \begin{equation} \label{eq:recurrence} (x_k+x_{k+1}) p(x_k) = G(x_k) + G(x_{k-1}). \tag{Rec}\end{equation}
    \item We have $x_{k+1} > x_k \; \forall k\ge0$ and, if $p>0$ everywhere, $x_k \to \infty$ as $k \to \infty.$

\end{enumerate}
\end{proposition}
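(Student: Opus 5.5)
Although the paper states these facts without proof, here is how I would prove them, in the order (2), (3), (1).

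For part (2), I would first derive (Obj) directly. A target at distance $r$ on the side explored during leg $k$ (the leg ending at turning point $x_k$), with $x_{k-2} < r \le x_k$, is found at time $2\sum_{j<k} x_j + r$. Suppose instead the target is never found before leg $k$ and lies beyond $x_{k-2}$ on that side. Summing the contributions gives
\[
\mathbb{E}[T] = \int_0^\infty r\,p(r)\,dr \cdot 2 + \sum_k 2x_k\, P(\text{not found by end of leg } k).
\]
The nondetection probability after leg $k$ is $G(x_k)+G(x_{k-1})$, since $p$ is even and each side's unexplored mass is $G$ of its furthest reach. So $\mathbb{E}[T] = \mathrm{const} + 2\sum_k x_k\,(G(x_k)+G(x_{k-1}))$ up to the normalization used in (Obj); I would check the constant carefully by an Abel-summation rearrangement. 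Then (Rec) is the first-order condition $\partial J/\partial x_k = 0$. The variable $x_k$ appears only in the $k$-th term and in the $(k+1)$-th term, which gives
\[
G(x_k)+G(x_{k-1}) - x_k p(x_k) - x_{k+1} p(x_k) = 0 .
\]
This is exactly (Rec). Interchanging differentiation with the infinite sum is legitimate because each $x_k$ enters only finitely many terms. I also need the minimizer to be interior, which comes from the monotonicity in (3).

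For part (3), I would use an exchange argument. If $x_{k+1}\le x_k$, then leg $k+1$ explores nothing new on its side. Deleting the turning points $x_k, x_{k+1}$, or merging them, yields a strategy that finds every target no later and some strictly earlier, which contradicts optimality. For $x_k\to\infty$, suppose instead $x_k\uparrow L<\infty$. Then the mass beyond $L$ is positive when $p>0$, and is never found. The expected time is then infinite, while part (1) shows finite strategies exist, which is again a contradiction.

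For part (1), the necessity direction uses $T\ge |x^*|$, so $\mathbb{E}T\ge\int |x|p$. For sufficiency, I would exhibit the doubling strategy $x_k=2^k$. Then $T\le 9|x^*|+O(1)$, so $\mathbb{E}T<\infty$. Existence of a minimizer is the main obstacle. I would take a minimizing sequence of strategies and truncate/normalize it using (3) and the bound $x_1\le C$ coming from finite cost. Extracting a coordinatewise convergent subsequence by diagonalization, with values in $[0,\infty]$, I would then use Fatou's lemma and lower semicontinuity of $J$ under coordinatewise convergence, since each term is continuous and nonnegative. The delicate point is ruling out the limit escaping, for example $x_k\to\infty$ at a finite index. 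I would handle this by showing that such limits have cost at least that of the truncated strategy, which is still admissible.
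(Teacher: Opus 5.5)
The paper states this proposition without proof, citing it as well-known facts, so I am judging your argument on its own merits. Your parts (1) and (2) are sound in outline. Part (3), however, has a genuine gap, and it is exactly the part the paper calls nontrivial.

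The claim ``if $x_{k+1}\le x_k$, then leg $k+1$ explores nothing new on its side'' is false. Leg $k+1$ is on the same side as leg $k-1$; you use this yourself in part (2), where leg $k$ covers $(x_{k-2},x_k]$. So leg $k+1$ explores new ground as soon as $x_{k+1}>x_{k-1}$. Your deletion/merging argument therefore proves only the easy fact that each side's reach increases, $x_{k+1}>x_{k-1}$. That fact is all part (2) needs for the first-order condition, so (2) survives. But it cannot give $x_{k+1}>x_k$, because it never uses that $p$ is even, and monotonicity of the moduli fails for asymmetric targets. For example, with most of the mass on $[0,1]$ and a little on $[-1/2,0]$, the optimal search goes to $1$ first and then only to $-1/2$.

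The missing idea is an adjacent transposition that uses symmetry through the fact that (Obj) applies the same $G$ to both sides. First, for any nonnegative sequence $y$, the mass still unfound after leg $j$ is at most $G(y_j)+G(y_{j-1})$. Hence (Obj) evaluated at $y$ upper-bounds the true cost of $y$ (up to the same affine normalization), with equality when each side's reach is nondecreasing. So the optimal $x$ minimizes $J$ over all sequences. Now suppose $x_{k+1}<x_k$, and let $y$ be $x$ with $x_k$ and $x_{k+1}$ interchanged. Only terms $k,k+1,k+2$ change, and
\[
J[y]-J[x] = -(x_k-x_{k+1})\bigl(G(x_{k-1})-G(x_k)\bigr)-(x_{k+2}-x_k)\bigl(G(x_{k+1})-G(x_k)\bigr).
\]
This is strictly negative when $p>0$, since $x_{k-1}<x_{k+1}<x_k<x_{k+2}$ by the easy monotonicity, contradicting optimality.

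A tie $x_k=x_{k+1}=b$ is excluded by (Rec) at indices $k$ and $k+1$. These read $2b\,p(b)=G(b)+G(x_{k-1})$ and $(b+x_{k+2})\,p(b)=2G(b)$. Since $x_{k+2}>b$, they force $G(b)>G(x_{k-1})$, which is impossible because $x_{k-1}<b$.

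A smaller point in part (1): your ``truncated strategy'' remark is too vague to rule out a coordinate escaping to infinity at a finite index, but this is easy to do directly. Along a minimizing sequence with $J\le C$, you have $x_1\le 2C$ (since $G(0)=1/2$) and $x_k\le C/G(x_{k-1})$. By induction, every coordinate stays bounded when $p>0$.
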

The form of the objective function $J$ follows from a straightforward computation of the expectation of the first passage time (note that a $p$-dependent constant term has been omitted). The recurrence (\ref{eq:recurrence}) essentially reduces the computation of the turning points to finding $x_1,$ but this is known to be extremely difficult. The third fact---that the optimal turning points are strictly increasing in modulus---is nontrivial and very useful. 

It is also obvious that any optimal strategy must visit the entirety of the support of $p$, or else the expected first passage time diverges; for example, for $p>0$ on $\mathbb{R},$ we must have that $x_k \to \infty$ as $k \to \infty.$ This latter case will be the main focus of what follows; due to symmetry, it will generally suffice to think of $p$ as a density on $\mathbb{R}^+.$

\subsection{The hazard function}
We find it useful to perform our analysis in terms of the \emph{hazard function}, a standard object in the theory of probability densities over $\mathbb{R}^+$ and survival analysis \cite{kalbfleisch2002}.

\begin{definition}
Let $p$ be a probability density on $\mathbb{R}^+,$ and let $G(x) := \int_x^\infty p(t) \, dt$ be its survivor function. The function $h(x) := p(x) / G(x)$ is called the \emph{hazard function} of $p.$  
\end{definition}

Equivalently, if $X$ is the random variable specifying the (modulus of the) target position, we have
\[
h(x) = \lim_{\epsilon\to 0} \frac{\mathrm{Pr}(X \in [x,x+\epsilon) \mid X\ge x)}{\epsilon}.
\]
That is, $h(x)$ represents the instantaneous rate per unit distance that the target is located at position $x,$ given that it is not located closer to the origin than $x.$

 Notably, the hazard of a density $p$ uniquely specifies $p$; in particular, since $h = p/G = -G'/G,$ we have $G(x) = \exp(-\int_0^x h(t) \, dt).$ We present the hazards of a few well-known kinds of probability densities below.
\begin{example}[Pareto]
    If $p(x)\asymp x^{-\alpha}$ for some $\alpha>1,$ then the hazard is $h(x) \sim (\alpha-1)/x.$
\end{example}
\begin{example}[Stretched/compressed exponential]
    If $p(x)\asymp \exp(-(x/a)^b)$ for some $a,b>0,$ then the hazard is $h(x) \asymp x^{b-1}.$ In particular, the exponential distribution has constant hazard, and the normal distribution has linear hazard.
\end{example}
\begin{example}[Lognormal]
    If $p(x)\asymp x^{-1} \exp(-\log^2 x/2\sigma^2)$ for some $\sigma>0,$ then the hazard is $h(x) \asymp \log x/x.$
\end{example}
\begin{example}[Gumbel]
If $p(x) \asymp \exp(-\exp(x)),$ then $h(x) \asymp \exp(x).$
\end{example}

It is clear from the definition that the hazard must itself be positive wherever $p>0.$ However, for the kinds of distributions in which we are interested (monotone, sufficiently regular), much more can be said. In particular, we have the following useful Lemma, which establishes a sharp lower bound on $h.$
\begin{lemma}\label{lemma:hazard}
Let $p>0$ be a probability density on $\mathbb{R}^+$ such that $\int_0^\infty x p(x) \, dx < \infty .$ Let $h$ be the hazard function associated with $p.$ If $h$ is eventually monotone and $L := \lim_{x\to \infty} x h(x)$ exists, then $h(x) \ge c/x$ for some $c>1$ and large enough $x.$
\end{lemma}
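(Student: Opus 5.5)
The plan is to prove that $L>1$, where $L=+\infty$ is allowed. Once this holds, pick any $c$ with $1<c<L$ (any $c>1$ if $L=\infty$). The definition of the limit then gives $xh(x)\ge c$ for all large $x$, which is the claim. Everything therefore reduces to excluding $L\le 1$, and I would argue by contradiction using the finite-mean hypothesis. That hypothesis gives $\int_0^\infty G(x)\,dx=\int_0^\infty xp(x)\,dx<\infty$ after integrating by parts (or by Tonelli). I would also use the representation $G(x)=G(x_0)\exp\bigl(-\int_{x_0}^x h(t)\,dt\bigr)$ recorded just before the lemma.

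\emph{Case $L<1$.} Fix $c'$ with $L<c'<1$ (take $c'\in(0,1)$ arbitrary if $L=0$). For $t\ge x_0$ we have $h(t)\le c'/t$, so $\int_{x_0}^x h\le c'\log(x/x_0)$. This gives $G(x)\ge G(x_0)(x_0/x)^{c'}$, which is not integrable at infinity because $c'<1$. That contradicts $\int G<\infty$. Eventual monotonicity of $h$ is not needed in this case.

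\emph{Case $L=1$.} This is the main obstacle. The same comparison only yields $G(x)\ge x^{-1-\epsilon}$ for every $\epsilon>0$, which is integrable, so the first argument gives nothing. I would try to use the eventual monotonicity of $h$ here. Write $xh(x)=1+\eta(x)$ with $\eta\to0$. Integrability of $G=\exp(-\int h)$ then forces $\int^x \eta(t)\,dt/t\to+\infty$ fast enough to make $\exp\bigl(-\int^x\eta(t)\,dt/t\bigr)/x$ integrable. I would test whether monotonicity of $h$, together with this rate, is incompatible with $\eta\to0$.

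I am skeptical that this can succeed. The candidate $h(x)=(1+2/\log x)/x$ for large $x$ is eventually decreasing and has $xh\to1$. It gives $G(x)\asymp 1/(x\log^2 x)$, which is integrable, and the corresponding $p=hG\asymp 1/(x^2\log^2x)$ has finite first moment. So when $L=1$ the strict bound with a constant $c>1$ appears to fail. In that event I would prove the statement in the form $L\ge1$ (equivalently, $h(x)\ge c/x$ for every $c<1$ eventually). I would then either add a hypothesis excluding $L=1$, or weaken the conclusion to $xh(x)\to L\ge1$ with strict inequality whenever $L\ne1$. I would check how the lemma is used later, in the power-law edge case, to decide which version is actually needed.
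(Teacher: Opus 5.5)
Your analysis is right, and it exposes an error in the paper's own proof rather than a gap in yours. The paper uses the same comparison route. If $h$ is eventually increasing, then $L=\infty$. If $h$ is eventually decreasing with $L\in(0,\infty)$, it writes $\log G(x)=-L\log x+o(\log x)$, i.e.\ $G(x)=x^{-L+o(1)}$, and concludes from $\int_0^\infty G(x)\,dx<\infty$ that $L>1$. That inference only gives $L\ge1$, because $x^{-1+o(1)}$ is compatible with integrability.

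Your example shows that $L=1$ really occurs. Take $G(x)=1/(x\log^2x)$ for $x\ge e$, and any strictly decreasing $G$ from $1$ to $1/e$ on $[0,e]$. Then $p=-G'>0$, $h(x)=(1+2/\log x)/x$ is decreasing on $[e,\infty)$, and $\int_0^\infty xp(x)\,dx=\int_0^\infty G(x)\,dx<\infty$. Yet $xh(x)\to1$, so $h(x)<c/x$ eventually for every $c>1$. The lemma is therefore false as stated. The correct version is exactly yours: $L\ge1$ always, and $h(x)\ge c/x$ eventually for some $c>1$ precisely when $L>1$ (including $L=\infty$).

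Organizing by the value of $L$, rather than by the direction of monotonicity, is also a small improvement on the paper. It covers $L=0$, which the paper skips by taking $L\in(0,\infty)$. It also shows that monotonicity of $h$ plays no role once $L$ is assumed to exist.

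On downstream use: strict $c>1$ is genuinely needed in Lemma~\ref{lemma:noexplode}, since the geometric competitor's series $\sum_j r^{(1-c)j}$ converges only if $c>1$. Theorem~\ref{thm:increments} invokes both lemmas, so the natural repair is to add $L\neq1$ (equivalently $L>1$) to their hypotheses. Theorem~\ref{thm:asymptotics} assumes $h=\omega(1/x)$, i.e.\ $L=\infty$, and Proposition~\ref{prop:pareto} has $L=a>1$, so the main results are unaffected.
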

\begin{proof}
If $h$ is eventually increasing, say for all $x>x_0,$ then $h(x)\ge h(x_0)>0$ and for any $c>0$ we have $x\ge c/h(x_0)$ for large enough $x.$ Therefore $h(x) \ge c/x$ eventually.

Now suppose $h$ is eventually decreasing. If $L=\infty,$ we have immediately that $h(x) = \omega(1/x)$ by definition. Otherwise take $L \in (0,\infty).$ Then $h(x) \sim L /x$ and 
\[ \log G(x) = -\int_{0}^x h(t) \, dt = - L \log x + o(\log x)\]
so $G(x) = x^{-L+o(1)}.$
But the first moment is
\[
\int_0^\infty x p(x) \, dx = \int_0^\infty G(x) dx < \infty
\]
so $L >1.$ Taking any $c\in(1,L)$ then gives the claim for large enough $x.$
\end{proof}

After dividing through by $G(x_k)$ and taking logarithms, the recurrence (\ref{eq:recurrence}) can be re-expressed in terms of the hazard as follows:
\begin{equation}
\label{eq:hazardrecurrence}
    \log\left((x_k+x_{k+1})h(x_k) -1\right) = \int_{x_{k-1}}^{x_k} h(x) \, dx. \tag{Rec'}
\end{equation}
Through this equation, controlling how, and by how much, the hazard can change over the interval $[x_{k-1}, x_k]$ will provide powerful control over the optimal turning points themselves. This motivates the use of monotonicity and regularity hypotheses on the hazard, which are weak assumptions in the sense that they apply to essentially any commonly studied distribution on $\mathbb{R}^+$ with finite expectation.

\subsection{Regular variation and de Haan's class $\Gamma$}
In order to control the integral on the RHS of (\ref{eq:hazardrecurrence}), we need the hazard to be sufficiently regular. The precise necessary notions of regularity are standard \cite{bingham1989} and stated below.\footnote{It is probably possible to generalize and streamline our results by using a unified notion such as ``Beurling regular variation'' \cite{bingham2014}, but instead we choose to work with more well-known notions.}
 \begin{definition}[Slow variation]\label{def:slowvar}
    Let $L: [A,\infty) \to \mathbb{R}^+$ for some $A>0$ be Lebesgue-measurable. We say that $L$ is \emph{slowly varying} if, for all $\lambda>0$
    \[
    L(\lambda x) \sim L(x)
    \]
    as $x\to \infty.$
\end{definition}
Morally speaking, slowly varying functions are slower than polynomial; a prototypical example is any power of a logarithm. As a useful fact, they obey the so-called \emph{Potter bound}, i.e., for any $\eta>0$ and $t_1,t_2$ large enough, there exists $C_\eta$ such that
\[
\frac{L(t_1)}{L(t_2)} \le C_\eta \max \left\{\left( \frac{t_1}{t_2} \right)^\eta,\left( \frac{t_1}{t_2} \right)^{-\eta} \right\}.
\]

Slow variation forms the basis of the notion of ``regular variation.''
 \begin{definition}[Regular variation]\label{def:regvar}
    Let $f: [A,\infty) \to \mathbb{R}^+$ for some $A>0$ be Lebesgue-measurable. We say that $f$ is \emph{regularly varying} of index $\rho$ and $f\in\mathcal{R}(\rho)$ if
    \[
    f(x) = x^\rho L(x)
    \]
    for some slowly varying function $L.$
\end{definition}
In particular, any function with a power-law tail is regularly varying. A standard, useful consequence of regular variation is the \emph{uniform convergence property} which says that, for any $\lambda>0,$
\[
\frac{f(\lambda x)}{f(x)} \to \lambda^\rho
\]
uniformly in $\lambda.$ 

Regular variation will help us characterize the solutions for polynomially growing or slower hazard. In order to extend the results to rapidly varying (say, exponentially growing) functions, we also need an alternate kind of regularity, which is also standard:
\begin{definition}[De Haan's class $\Gamma$]\label{def:gamma}
    Let $f: [A,\infty) \to \mathbb{R}^+$ for some $A>0$ be Lebesgue-measurable. We say that $f$ is in \emph{de Haan's class} $\Gamma$ and $f \in \Gamma$ if there exists an \emph{auxiliary function} $a(x)>0$ such that for all fixed $t\in\mathbb{R},$
    \[
    \frac{f(x + t a(x))}{f(x)} \to e^t
    \]
    as $x \to \infty.$
\end{definition}
As a remark, it is well-known that any such auxiliary function necessarily satisfies $a(x) = o(x)$ (a fact which we will use more than once) and moreover is \emph{self-neglecting}, that is 
\[
\frac{a(x + ta(x))}{a(x)} \to 1,
\]
uniformly in $t.$ 

Note in particular that if $f(x)=\exp(g(x)),$ with $g$ increasing and twice-differentiable and $g''/g'^2  \to 0,$ then $f \in \Gamma$ with auxiliary $1/g'.$ Hence, $\Gamma$ encompasses essentially any ``nice'' function growing faster than any polynomial. Conversely, one can show that any $f\in \Gamma$ grows faster than polynomial (see proof of Theorem~\ref{thm:asymptotics}).

We close this section with a useful fact:  $p$ inherits monotonicity from $h$ if $h$ is either RV or in class $\Gamma.$
\begin{proposition}
Let $p>0$ be a density on $\mathbb{R}^+$ with hazard $h.$ If $h$ is eventually monotone and either regularly varying or in de Haan's class $\Gamma,$ then $p$ is eventually monotone decreasing.
\end{proposition}
\begin{proof}
First consider $h \in \mathcal{R}(\rho).$ First consider $\rho<0,$ so that $h$ is eventually monotone decreasing. Note that $p(x) = h(x) \exp\left(-\int_0^{x} h(t) \, dt \right)$, so for any $\lambda>1$
\[
\frac{p(\lambda x)}{p(x)} = \frac{h(\lambda x)}{h(x)} \exp\left(-\int_x^{\lambda x} h(t) \, dt\right) \le \frac{h(\lambda x)}{h(x)} \to \lambda^{\rho},
\]
with the last step coming from uniform convergence. Hence for any $\epsilon>0,$
\[
\frac{p(\lambda x)}{p(x)} \le \lambda^{\epsilon+\rho},
\]
and taking $\epsilon < |\rho|$ yields $\frac{p(\lambda x)}{p(x)}<1$ for large enough $x.$ 

Now consider $\rho>0$ ($h$ eventually increasing). 
From the Potter bound, we have for any $\lambda>1$ and $\eta>0, $
\[
\frac{h(\lambda x)}{h(x)} \le C_\eta \lambda^{\rho+\eta}
\]
for sufficiently large $x.$ Moreover, $\int_{x}^{\lambda x} h(t) \, dt \ge (\lambda -1 ) x h(x).$ Combining these, we have
\[
\frac{p(\lambda x)}{p(x)} \le C_\eta \lambda^{\rho + \eta} e^{-(\lambda-1) x h(x)} < 1
\]
for large enough $x.$

Finally, we turn our attention to $h\in\Gamma.$ In this case, $h$ is eventually increasing, and we have for any $t>0$
\[
\int_x^{x+ta(x)} h(u) \, du \ge t a(x) h(x),
\]
so
\[
\frac{p(x + t a(x))}{p(x)} \le \frac{h(x+t a(x))}{h(x)} e^{-t a(x) h(x)} \to e^{t(1- a(x) h(x))}.
\]
It remains to show that $a(x)h(x) \to \infty.$ For any $t>0,$ and $\epsilon \in(0,1),$ $h(x+a(x))/h(x) \ge e^{1-\epsilon}$ and (due to the self-neglecting property) $a(x+a(x))/a(x) \ge 1-\epsilon$ for large enough $x.$ Then, defining a sequence $\{x_n\}$ by $x_{n+1} = x_n + a(x_n),$ we have 
\[
\frac{a(x_{n+1})h(x_{n+1})}{a(x_{n})h(x_{n})} \ge (1-\epsilon)e^{1-\epsilon} >1
\]
for small enough $\epsilon.$ Hence $a(x_n) h(x_n) \to \infty.$ We can extend this to any $x$ by choosing $n$ such that $x\in[x_n,x_{n+1}],$ which gives $a(x) \asymp a(x_n)$ (by self-neglecting) and hence $a(x) h(x) \gtrsim a(x_n) h(x_n) \to \infty.$
\end{proof}

\section{Results}\label{sec:results}
We first prove the following lemma, which essentially says that the solutions to the LSP cannot grow faster than exponential under monotone hazards. To establish this control, it is necessary to use the objective function itself (rather than just the recurrence). The proof strategy, which involves introducing a ``competitor'' sequence $y_k,$ previously was used, e.g.\ in \cite{beck1984} when studying the special case of a normally distributed target.
\begin{lemma}\label{lemma:noexplode}
Let $\{x_k\}$ be a minimizing sequence of turning points for Problem~\ref{prob:lsp}. Under the same hypotheses on $h$ and $p$ as in Lemma~\ref{lemma:hazard}, there exists $M>1$ such that $x_{k+1} \le M x_k$ for large enough $k.$
\end{lemma}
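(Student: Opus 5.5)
We argue by contradiction with a competitor sequence. Suppose that for every $M>1$ there are infinitely many $k$ with $x_{k+1} > M x_k$. At such an index we insert an extra turning point and compare objective values. Concretely, define $y$ by inserting a new point $z$ with $x_k < z < x_{k+1}$, or more simply by using $y_j = x_j$ for $j\le k$, then $y_{k+1} = z$, $y_{k+2}=x_{k+1}$, and shifting the rest. Since $\{x_k\}$ minimizes (\ref{eq:objective}), we need $J[y]\ge J[x]$, and we aim to show this fails when $x_{k+1}/x_k$ is large.

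The first step is the bookkeeping. Only the terms near the insertion change. Because the tail after the insertion has its indices shifted but the consecutive pairs $(y_{j-1},y_j)$ are unchanged beyond $x_{k+1}$, we get
\[
J[y]-J[x] = z\bigl(G(z)+G(x_k)\bigr) + x_{k+1}\bigl(G(x_{k+1})+G(z)\bigr) - x_{k+1}\bigl(G(x_{k+1})+G(x_k)\bigr),
\]
which simplifies to
\[
J[y]-J[x] = (z + x_{k+1})G(z) + zG(x_k) - x_{k+1}G(x_k).
\]
We choose $z = \lambda x_k$ with a fixed $\lambda>1$, so that $z< x_{k+1}$ once $M>\lambda$. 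It then suffices to show
\[
(\lambda x_k + x_{k+1})\,G(\lambda x_k) < (x_{k+1}-\lambda x_k)\,G(x_k).
\]
Dividing by $x_{k+1}G(x_k)$, using $x_{k+1}>Mx_k$ and $G(\lambda x_k)/G(x_k)=\exp(-\int_{x_k}^{\lambda x_k}h)$, this reduces to
\[
\Bigl(1+\tfrac{\lambda}{M}\Bigr)\exp\Bigl(-\int_{x_k}^{\lambda x_k} h\Bigr) < 1-\tfrac{\lambda}{M}.
\]

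The second step is to bound the integral using Lemma~\ref{lemma:hazard}. This gives $h(t)\ge c/t$ with $c>1$ for large $t$, and since $x_k\to\infty$ we may apply it for large $k$. Then
\[
\int_{x_k}^{\lambda x_k}h \ge c\log\lambda,
\qquad\text{so}\qquad
\exp\Bigl(-\int_{x_k}^{\lambda x_k} h\Bigr) \le \lambda^{-c}.
\]
It therefore suffices to have $(1+\lambda/M)\lambda^{-c} < 1-\lambda/M$. Fixing $\lambda=2$, we have $2^{-c}<1$, so this inequality holds once $M$ is large enough. That gives $J[y]<J[x]$, contradicting minimality, and hence $x_{k+1}\le Mx_k$ for all large $k$. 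In fact $c>0$ already suffices here; the condition $c>1$ is not needed, though $\int xp<\infty$ still guarantees that $J$ is finite.

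The main obstacle is justifying that $y$ is an admissible competitor with finite, comparable objective. The inserted point changes the parity of the subsequent turning points, so $y$ searches the sides in swapped order. Because $p$ is even, $J$ depends only on the moduli, so parity is harmless. The subtraction $J[y]-J[x]$ is legitimate because both series converge and agree termwise after reindexing. I would state this explicitly, noting that the representation (\ref{eq:objective}) holds for any increasing sequence tending to infinity, and that $y$ is increasing. No regularity beyond Lemma~\ref{lemma:hazard} is needed.
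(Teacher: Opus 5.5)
Your argument is correct, and it takes a genuinely different route from the paper.

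\textbf{The paper's route.} The paper replaces the whole tail after $x_N$ by the geometric competitor $y_{N+j}=x_N 2^{j/c}$. It derives the general inequality (\ref{eq:competition}) and bounds the resulting infinite sum using $G(u)\le G(v)(u/v)^{-c}$. That sum is $\sum_j r^{j(1-c)}$, which converges only because $c>1$. This is exactly where the first-moment condition enters, through Lemma~\ref{lemma:hazard}, and it gives $M=(r^c+1)/(r^{c-1}-1)$.

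\textbf{Your route.} Your one-point insertion is a purely local perturbation: only three terms of $J$ change. Your identity $J[y]-J[x]=(z+x_{k+1})G(z)-(x_{k+1}-z)G(x_k)$ is right.

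\textbf{What your route buys.}
\begin{itemize}
\item It needs only that $G(2x)/G(x)$ stays bounded away from $1$. So any $c>0$ works, and you get the explicit threshold $M>2(2^c+1)/(2^c-1)$.
\item It yields Corollary~\ref{cor:noexplode} immediately. If $h=\omega(1/x)$, then $G(\lambda x_k)/G(x_k)\to 0$ for each fixed $\lambda>1$, so any $M>\lambda$ works; then let $\lambda\downarrow 1$.
\item The paper's formulation buys the reusable inequality (\ref{eq:competition}), which it applies again with an arithmetic competitor in Theorem~\ref{thm:increments}. Your insertion is just as reusable. Taking $z=x_k+\log(x_kh(x_k))/h(x_k)$ with $h$ increasing recovers the same bound $\Delta_{k+1}\le \log(x_kh(x_k))/h(x_k)+O(1/h(x_k))$, so nothing is lost.
\end{itemize}

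\textbf{Two minor points.}
\begin{itemize}
\item The contradiction framing is unnecessary. Your estimate directly shows $x_{k+1}\le Mx_k$ once $x_k$ exceeds the threshold from Lemma~\ref{lemma:hazard}.
\item The parity concern is moot. Any increasing sequence of moduli tending to infinity defines an admissible alternating strategy, and its cost is given by (\ref{eq:objective}).
\end{itemize}
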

\begin{proof}
For any sequence $\{y_k\}$ such that $y_k = x_k \; \forall k\le N,$ we may write
\begin{align*}
J[y] - J[x] &= \sum_{k\ge N} \left( (y_k + y_{k+1}) G(y_k) - (x_k + x_{k+1}) G(x_k) \right) \\ &\le (y_N + y_{N+1}) G(y_N) - (x_N + x_{N+1}) G(x_N) + \sum_{k\ge N+1} (y_k + y_{k+1}) G(y_k) \\ &= (y_{N+1}- x_{N+1}) G(x_N)+ \sum_{k\ge N+1} (y_k + y_{k+1}) G(y_k).
\end{align*}
But by optimality of $x_k,$ $J[y] \ge J[x],$ so
\begin{equation}\label{eq:competition}
x_{N+1} \le y_{N+1} + \frac{1}{G(x_N)} \sum_{k\ge N+1} (y_k + y_{k+1}) G(y_k).
\end{equation}
Now, by Lemma~\ref{lemma:hazard}, there exists $c>1$ and $x_0>0$ such that $h(x) \ge c/x$ for all $x>x_0.$ Then $u\ge v \ge x_0$ implies
\begin{equation} \label{eq:powerbound}
\log \frac{G(u)}{G(v)} = -\int_v^u h(t) \, dt \le -\int_v^u \frac{c}{t} \, dt = -c \log \frac{u}{v} \implies G(u) \le G(v) \left(\frac{u}{v}\right)^{-c}.
\end{equation}
Take $N$ large enough that $x_N \ge x_0,$ and set $r= 2^{1/c} \in (1,2).$ Define $y_{N+j} = x_N r^j$ for $j\ge1.$ Then combining Eq.~\ref{eq:competition} and Eq.~\ref{eq:powerbound} yields
\[
x_{N+1} \le r x_N +  \sum_{j=1}^{\infty} x_N r^j (1 + r)r^{-cj} = r x_N + \frac{x_N(1+r)}{r^{c-1}-1},
\]
so $x_{N+1} \le M x_N$ with $M = (r^c+1)/(r^{c-1}-1) = 3/(2r^{-1}-1) \in(1,\infty).$
\end{proof}
The Lemma can be strengthened if we know that $h(x) = \omega(1/x).$
\begin{corollary}\label{cor:noexplode}
If, in addition to the hypotheses of Lemma~\ref{lemma:noexplode}, we have $h(x) = \omega(1/x),$ then for any $M>1,$ there exists $K$ such that $x_{k+1} \le M x_k$ whenever $k\ge K.$
\end{corollary}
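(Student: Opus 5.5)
The plan is to rerun the competitor argument from the proof of Lemma~\ref{lemma:noexplode}, with the exponent $c$ now taken arbitrarily large. Since $h(x)=\omega(1/x)$, for every $c>1$ there is $x_0(c)$ such that $h(x)\ge c/x$ for all $x>x_0(c)$. Exactly as in (\ref{eq:powerbound}), this gives
\[
G(u)\le G(v)\left(\frac{u}{v}\right)^{-c} \qquad \text{for all } u\ge v\ge x_0(c).
\]
By Proposition~\ref{prop:lspfacts}, $x_k\to\infty$, so there is $K(c)$ with $x_N\ge x_0(c)$ for every $N\ge K(c)$.

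Fix $N\ge K(c)$ and a ratio $r>1$, and use the geometric competitor $y_{N+j}=x_N r^j$ for $j\ge1$, with $y_k=x_k$ for $k\le N$. Inequality (\ref{eq:competition}) together with the power bound gives
\[
x_{N+1}\le r x_N+(1+r)x_N\sum_{j\ge1} r^{j(1-c)} = x_N\left(r+\frac{1+r}{r^{c-1}-1}\right),
\]
which is valid whenever $r^{c-1}>1$, i.e.\ for every $r>1$ once $c>1$. This step needs a small check. The paper's derivation used $G(y_N)=G(x_N)$ and dropped the negative term $-x_{N+1}G(x_N)$ appropriately. It also needs $G(y_k)\le G(x_N)r^{-c(k-N)}$ for $k\ge N+1$, which is the displayed bound with $u=x_Nr^{k-N}$ and $v=x_N$. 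All of this is independent of $c$, so it carries over verbatim.

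Now let $M>1$ be given. Choose $r\in(1,M)$, for instance $r=(1+M)/2$. Since $r>1$, we have $r^{c-1}\to\infty$ as $c\to\infty$. Hence we can fix $c$ large enough that
\[
\frac{1+r}{r^{c-1}-1}\le M-r.
\]
With this $c$, set $K=K(c)$. Then $x_{N+1}\le Mx_N$ for every $N\ge K$, which is the claim.

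The only real obstacle is the order of quantifiers. The constant $c$ must be chosen first, depending only on $M$, and only then $K$, depending on $c$. That order is exactly what the hypothesis $h=\omega(1/x)$ permits, as opposed to the fixed $c$ coming from $L<\infty$ in Lemma~\ref{lemma:hazard}. Beyond that, the proof is the same computation as in Lemma~\ref{lemma:noexplode}, with the free parameter $r$ no longer tied to $c$ through $r=2^{1/c}$.
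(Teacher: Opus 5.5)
Your proof is correct and is the \emph{mutatis mutandis} argument the paper intends. You rerun the geometric-competitor bound from Lemma~\ref{lemma:noexplode} with $r$ decoupled from $c$, fixing $r\in(1,M)$ first, then $c$ large (allowed by $h=\omega(1/x)$), and only then $K$. (One harmless slip in your commentary: the terms discarded in deriving (\ref{eq:competition}) are $-(x_k+x_{k+1})G(x_k)$ for $k\ge N+1$, while $-x_{N+1}G(x_N)$ is kept and supplies the left-hand side.)
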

\begin{proof}
    Same as the proof of Lemma~\ref{lemma:noexplode}, \emph{mutatis mutandis}.
\end{proof}

We now turn our attention to the increments $\Delta_k := x_k-x_{k-1}.$ Under an additional weak regularity assumption on $h,$ we can establish a trichotomy which classifies the limiting value of these increments (zero, finite, or infinite). 
\begin{theorem}\label{thm:increments}
Let $p>0$ be a probability density on $\mathbb{R}^+$ such that $\int_0^\infty x p(x) \, dx < \infty .$ Let $h$ be the hazard function associated with $p,$ and let $\{x_k\}$ be a minimizing sequence of turning points for Problem~\ref{prob:lsp} (using the symmetric extension $p(x) = p(-x)$ as the target density). Define $\Delta_k := x_k-x_{k-1}.$ If $h$ is eventually monotone and $L := \lim_{x\to \infty} x h(x)$ exists, then the following hold: 
\hfill
\begin{enumerate}
    \item If $h(x) = o(\log x),$ then $\Delta_k \to \infty.$
    \item If $h(x) = \omega(\log x),$ then $\Delta_k \to 0.$
    \item If $h(x) \sim c \log x$ for some $c\in(0,\infty),$ then $\Delta_k \to 1/c.$
\end{enumerate}
\end{theorem}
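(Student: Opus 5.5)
The plan is to work directly with the hazard form (\ref{eq:hazardrecurrence}) of the recurrence,
\[
\log\bigl((x_k+x_{k+1})h(x_k)-1\bigr)=\int_{x_{k-1}}^{x_k} h(t)\,dt,
\]
and to squeeze the integral on the right between $\Delta_k h(x_{k-1})$ and $\Delta_k h(x_k)$, using eventual monotonicity of $h$. The left-hand side will be controlled by Lemma~\ref{lemma:noexplode} and Lemma~\ref{lemma:hazard}. Since $x_k<x_{k+1}\le Mx_k$, the argument of the logarithm lies between $2x_kh(x_k)-1$ and $(1+M)x_kh(x_k)$. By Lemma~\ref{lemma:hazard}, $x_kh(x_k)\ge c>1$, so $2x_kh(x_k)-1\ge 2c-1>1$ and the left-hand side is bounded below by a positive constant. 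In each case $x_kh(x_k)\to\infty$, so the left-hand side is $\log(x_kh(x_k))+O(1)$. Writing $H_k:=\int_{x_{k-1}}^{x_k}h$, we therefore have
\[
H_k=\log x_k+\log h(x_k)+O(1).
\]

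\textbf{Case (3): $h\sim c\log x$.} Here $H_k=\log x_k+\log\log x_k+O(1)$. Since $h$ is increasing, $H_k$ lies between $\Delta_k h(x_{k-1})$ and $\Delta_k h(x_k)$. This gives the crude bound
\[
\Delta_k\le \frac{H_k}{h(x_{k-1})}\approx \frac{\log x_k}{c\log x_{k-1}}.
\]
Because $x_k\le Mx_{k-1}$, we have $\log x_k/\log x_{k-1}\to 1$, so $\Delta_k$ is bounded. Boundedness of $\Delta_k$ then gives $h(x_k)/h(x_{k-1})\to1$, and hence $\Delta_k\sim H_k/(c\log x_k)=1/c+o(1)$. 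The $\log\log$ and $O(1)$ terms are lower order relative to $\log x_k$, which is what makes this limit exact.

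\textbf{Case (2): $h=\omega(\log x)$.} The same sandwich gives
\[
\Delta_k\le \frac{\log x_k+\log h(x_k)+O(1)}{h(x_{k-1})}.
\]
We need this to tend to $0$, which requires the comparison $h(x_{k-1})\gg \log h(x_k)$. This is the main obstacle: if $h$ grows very fast (e.g.\ $h\in\Gamma$), $h(x_k)$ could a priori be far larger than $h(x_{k-1})$. I would first bootstrap. If $\Delta_k\ge\delta>0$ along a subsequence, then
\[
H_k\ge \delta\, h(x_k-\delta/2)\cdot\tfrac12 .
\]
Combined with $H_k\le\log(Mx_kh(x_k))$, this requires $h(x_k-\delta/2)\lesssim \log x_k+\log h(x_k)$. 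I would rule this out using $h=\omega(\log x)$ together with a mild regularity, namely that $\log h$ does not jump by more than a constant factor of $h$ over bounded intervals. This holds automatically for monotone $h$ with $h(x)\ge\log x$, since $\log h(x_k)\le \log h(x_k-\delta/2)+\dots$. If this fails in full generality, I would split on whether $\log h(x_k)=O(h(x_k-\delta/2))$. In the contrary case, I would use the integral bound $H_k\ge\int_{x_k-\delta/2}^{x_k}h$ directly and compare it with $\log h(x_k)$, via the fact that $\int_{x-\delta}^x h\ge\log h(x)-O(1)$ fails only on a sparse set.

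\textbf{Case (1): $h=o(\log x)$.} I would argue by contradiction. Suppose $\Delta_k\le D$ along a subsequence. Then
\[
H_k\le D\max\bigl(h(x_{k-1}),h(x_k)\bigr)=o(\log x_k),
\]
while the left-hand side is at least $\log(2x_kh(x_k)-1)\ge\log\bigl((2c-1)\bigr)+\dots$. More precisely, it is $\ge \log x_k+\log h(x_k)-O(1)$, and $\log h(x_k)\ge\log(c/x_k)$. If $h$ is increasing, the left-hand side is $\gtrsim\log x_k$, which is a contradiction. If $h$ is decreasing with $xh\to L\in(1,\infty]$, the left-hand side is only $\log(x_kh(x_k))$. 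In that case I would instead use $H_k\le h(x_{k-1})\Delta_k\approx L\Delta_k/x_{k-1}\to0$ against the left-hand side being $\ge\log(2c-1)>0$, which is again a contradiction. So $\Delta_k\to\infty$.
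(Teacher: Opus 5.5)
Your Cases (1) and (3) are essentially sound, but Case (2) has a genuine gap. You correctly locate the obstacle: (\ref{eq:hazardrecurrence}) puts $\log h(x_k)$ on the left, while monotonicity only lets you divide by $h(x_{k-1})$. Your way around it fails, because the assertion that $\log h(x_k)\le \log h(x_k-\delta/2)+\dots$ holds ``automatically'' for monotone $h\ge\log x$ is false. An increasing hazard satisfying every hypothesis of the theorem, even a smooth one, can climb from a value $A$ to a value $B$ with $\log B\gg A$ over an arbitrarily short interval. Suppose $x_k$ lies just past such a climb, at distance $\epsilon$. Then $H_k\approx\delta A+\epsilon B$, and a tiny $\epsilon\approx(\log(2x_kB)-\delta A)/B$ makes this equal to $\log(2x_kB)+O(1)$ with $\Delta_k=\delta$. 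So (\ref{eq:hazardrecurrence}) at step $k$, your sandwich, and the ratio bound of Lemma~\ref{lemma:noexplode} are all consistent with $\Delta_k\ge\delta$. Your bootstrap inequality $h(x_k-\delta/2)\lesssim\log(x_kh(x_k))$ is then simply satisfied, not contradicted. The ``sparse set'' fallback does not help, since nothing in your argument controls where the $x_k$ land relative to that set. A stationarity condition plus a ratio bound is too weak to control increments when $h$ can rise abruptly.

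The missing ingredient is a second use of global optimality, this time at the level of increments rather than ratios. Apply Eq.~\ref{eq:competition} with the arithmetic competitor $y_{N+j}=x_N+rj$, where $r=\log(x_Nh(x_N))/h(x_N)$. Since $h$ is eventually increasing, $G(y_{N+j})\le G(x_N)e^{-h(x_N)rj}=G(x_N)(x_Nh(x_N))^{-j}$. Summing the resulting geometric series gives $\Delta_{N+1}\le r+O(1/h(x_N))$. This bound involves only $h(x_N)=h(x_{k-1})$, so the comparison of $h(x_k)$ with $h(x_{k-1})$ never arises, and it tends to $0$ when $h=\omega(\log x)$.

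Your Case (3) legitimately avoids this device, which the paper also uses there. For $h\sim c\log x$, Lemma~\ref{lemma:noexplode} already gives $h(x_{k-1})\sim h(x_k)$, so your sandwich closes.

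In Case (1), the decreasing subcase with $L=\infty$ needs a small repair, because $h(x_{k-1})\approx L/x_{k-1}$ is meaningless there. If $h\to0$, then $H_k\le Dh(x_{k-1})\to0$ contradicts $H_k\ge\log(2c-1)$. If $h\to h_\infty>0$, the left side tends to $\infty$ while $H_k\le Dh(x_{k-1})$ stays bounded. Finally, your opening claim that $x_kh(x_k)\to\infty$ ``in each case'' is false for $h\sim L/x$, though you never actually use it.
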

In particular, asymptotically logarithmic hazards---equivalent to densities with tail $p\asymp \exp(-c x \log x + O(x))$---act as a boundary case where the optimal turning points grow linearly. This is useful for certain applications which introduce an inequality constraint to the LSP, see for example \cite{heinonen}.

While the increments are interesting objects in their own right, the central result of this article is that, with a bit of extra regularity on $h$ (in the sense defined in the previous section), studying them in fact allows us to characterize the exact leading asymptotics of $x_k.$ We must exclude the case of power law tails ($h(x) \sim 1/x$), wherein LSP behaves qualitatively differently. This is summarized in the following Theorem.
\begin{theorem}\label{thm:asymptotics}
Suppose, in addition to the hypotheses of Theorem~\ref{thm:increments}, that $h(x)= \omega(1/x)$ and either $h\in \mathcal{R}(\rho)$ for some $\rho\in\mathbb{R}$ or $h \in \Gamma$. Then
\begin{equation}\label{eq:delta_asymp}
\Delta_k = \frac{\log(2 x_k h(x_k))}{h(x_k)} + o\left(\frac{1}{h(x_k)}\right)
\end{equation}
and
\begin{equation}\label{eq:int}
k \sim \int^{x_k} \frac{h(x)}{\log (x h(x))}\, dx.\footnote{We are unable to characterize the error term in Eq.~\ref{eq:int} without finer-grain control on $h.$ }
\end{equation}
\end{theorem}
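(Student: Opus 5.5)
We plan to work directly from (\ref{eq:hazardrecurrence}), writing it at index $k$ as
\[
\log\bigl((2x_k+\Delta_{k+1})h(x_k)-1\bigr)=\int_{x_{k-1}}^{x_k}h(x)\,dx .
\]
The key idea is that, on the scale of one increment, $h$ should be essentially constant. In other words, we want
\[
\int_{x_{k-1}}^{x_k}h = \Delta_k h(x_k)(1+o(1))
\]
together with $\Delta_k h(x_k)$ being of order $\log(x_kh(x_k))$. Granting these, the left side equals $\log(2x_kh(x_k)) + \log(1+\Delta_{k+1}/(2x_k) - 1/(2x_kh(x_k)))$. Here $x_kh(x_k)\to\infty$ because $h=\omega(1/x)$, so it remains to show $\Delta_{k+1}=o(x_k)$.

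\textbf{Step 1: $\Delta_{k+1}=o(x_k)$.} Corollary~\ref{cor:noexplode} gives $x_{k+1}\le Mx_k$ for every $M>1$ eventually, hence $\Delta_{k+1}/x_k\to0$. Then
\[
\int_{x_{k-1}}^{x_k}h=\log(2x_kh(x_k))+o(1).
\]
We still must convert the integral into $\Delta_k h(x_k)(1+o(1))$, which would yield (\ref{eq:delta_asymp}), since $o(1)/h(x_k)$ is absorbed into $o(1/h(x_k))$.

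\textbf{Step 2: localization (main obstacle).} We treat the two regularity classes separately.

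In the case $h\in\mathcal R(\rho)$, the uniform convergence theorem gives $h(\lambda x)/h(x)\to\lambda^\rho$ uniformly for $\lambda$ in compacts. Since $x_{k-1}/x_k\to1$ by Step 1 (applied at index $k-1$), we get $h(t)/h(x_k)\to1$ uniformly on $[x_{k-1},x_k]$, and the claim follows immediately.

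In the case $h\in\Gamma$, we need $\Delta_k=o(a(x_k))$, and Step 1 is far too weak for this. Our first attempt is a bootstrap. Suppose along a subsequence $\Delta_k\ge \epsilon a(x_k)$. Monotonicity and the $\Gamma$ property give
\[
\int_{x_{k-1}}^{x_k}h\ \ge\ \int_{x_k-\epsilon a}^{x_k}h \ \gtrsim\ a(x_k)h(x_k)\,(1-e^{-\epsilon}).
\]
Using self-neglect of $a$ to compare $a(x_k)$ with $a(x_k-\epsilon a(x_k))$, this integral is at least a constant times $a(x_k)h(x_k)$. The left side is $\log(2x_kh(x_k))+o(1)$. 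We therefore need
\[
\log(x h(x))=o(a(x)h(x)) .
\]
To get it, we use that $\log h$ has auxiliary-type derivative $\approx 1/a$, so $\log h(x)\approx\int^x dt/a(t)$. Self-neglect makes $a$ grow sublinearly with $a(x)h(x)\to\infty$ rapidly (this is also where we show $h$ grows faster than any polynomial). We expect a comparison of $\int^x 1/a$ with $a(x)h(x)$ to give the needed $o$-relation, and also $\log x = o(a h)$. This is the delicate point, and we expect it to use the proof technique of the preceding Proposition, namely iterating $x_{n+1}=x_n+a(x_n)$.

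Once $\Delta_k=o(a(x_k))$, the $\Gamma$ property gives $h(t)/h(x_k)\to1$ uniformly on $[x_{k-1},x_k]$ by monotonicity, and (\ref{eq:delta_asymp}) follows.

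\textbf{Step 3: counting.} Rewrite (\ref{eq:delta_asymp}) as
\[
1=\Delta_k\,\phi(x_k)(1+o(1)),\qquad \phi(x)=\frac{h(x)}{\log(xh(x))}.
\]
Then
\[
\int_{x_{k-1}}^{x_k}\phi = \Delta_k\phi(x_k)(1+o(1)),
\]
by the same localization as in Step 2. This needs $\log(xh)$ to vary negligibly over an increment, which is clear since $\log(x h)$ is slowly varying in the RV case and $\log h$ changes by $o(1)$ over $o(a)$ in the $\Gamma$ case. Hence each summand satisfies $\int_{x_{k-1}}^{x_k}\phi=1+o(1)$. Summing and using Cesàro averaging gives $\int^{x_k}\phi\sim k$, which is (\ref{eq:int}).
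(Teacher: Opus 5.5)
Your plan follows the paper's proof essentially step for step:
\begin{itemize}
\item Corollary~\ref{cor:noexplode} gives $\Delta_{k+1}=o(x_k)$, hence $\int_{x_{k-1}}^{x_k}h=\log(2x_kh(x_k))+o(1)$;
\item uniform convergence localizes $h$ on $[x_{k-1},x_k]$ in the $\mathcal{R}(\rho)$ case;
\item in the $\Gamma$ case, $\Delta_k=o(a(x_k))$ follows by contradiction from $\int_{x-\epsilon a(x)}^{x}h\sim(1-e^{-\epsilon})a(x)h(x)$;
\item the counting step sums $\int_{x_{k-1}}^{x_k}h(t)\,dt/\log(th(t))\to1$.
\end{itemize}

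The genuine gap is the one nontrivial input of the $\Gamma$ case, $\log(xh(x))=o(a(x)h(x))$. You only announce it (``we expect a comparison \dots''). The heuristic $\log h\approx\int^x dt/a(t)$ is not justified for a merely measurable monotone $h$ without de Haan's representation theorem, and no comparison is actually carried out.

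The iteration you point to does close the gap. Set $y_{n+1}=y_n+a(y_n)$.
\begin{itemize}
\item For $y_0$ large, $h(y_{n+1})\ge 2h(y_n)$ for all $n$. So $h(y_n)\to\infty$, and hence $y_n\to\infty$, since $h$ is monotone and finite.
\item Then $h(y_{n+1})/h(y_n)\to e$, and by self-neglect $a(y_{n+1})/a(y_n)\to1$. Hence $\log h(y_n)=n+o(n)$ and $\log(a(y_n)h(y_n))=n+o(n)$.
\item Since $a(x)=o(x)$, we get $\log y_n=o(n)$, and therefore $\log(y_nh(y_n))=o(a(y_n)h(y_n))$.
\item For $x\in[y_n,y_{n+1}]$, monotonicity of $h$ and the local uniformity of self-neglect give $a(x)h(x)\ge\frac12a(y_n)h(y_n)$ and $\log(xh(x))\le\log(y_{n+1}h(y_{n+1}))$. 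This transfers the estimate to all large $x$.
\end{itemize}
The paper reaches the same conclusion in two pieces. It proves \eqref{eq:claim} by contradiction (a bound $a\le C\log h/h$ would make $\sum_n a(y_n)$ converge). It then proves superpolynomial growth of $h$ by iterating with steps $ta(y_n)$, $t>1$, and combines the two into \eqref{eq:claim2}. The direct version above is actually the more robust one. The paper's contradiction hypothesis assumes $a\le C\log h/h$ for all large $x$, whereas negating $\omega(\cdot)$ only gives it along a sequence.

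A smaller issue, which your proposal shares with the paper, concerns Step~1. You say that $\int_{x_{k-1}}^{x_k}h=\Delta_kh(x_k)(1+o(1))$ ``would yield'' \eqref{eq:delta_asymp} because $o(1)/h(x_k)$ is absorbed. It does not. Combined with $\int_{x_{k-1}}^{x_k}h=\log(2x_kh(x_k))+o(1)$, it gives $\Delta_kh(x_k)=\log(2x_kh(x_k))(1+o(1))$. That is an error of size $o(\log(x_kh(x_k)))/h(x_k)$, which is not $o(1/h(x_k))$ since $x_kh(x_k)\to\infty$. The additive form would need the relative variation of $h$ across $[x_{k-1},x_k]$ to be $o(1/\log(x_kh(x_k)))$. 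Bare regular variation or membership in $\Gamma$ does not supply such a rate. The paper's derivation from \eqref{eq:monotone2} via $\bar h_k\sim\underline h_k$ has exactly the same limitation. So what both arguments establish is $\Delta_k\sim\log(2x_kh(x_k))/h(x_k)$. Fortunately, that is all your Step~3, and the paper's proof of \eqref{eq:int}, actually uses.
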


Theorem~\ref{thm:asymptotics} characterizes the asymptotics of any (well-behaved) density which decays faster than a power law; for any explicit expression for $h,$ we can in principle compute the integral (\ref{eq:int}) and invert to find the leading behavior of $x_k.$ For completeness, we also work out the case of power-law (Pareto) tails, in which case the optimal turning points grow exponentially.\footnote{This result is likely known, but it is unclear if it has ever been published.}
\begin{proposition}\label{prop:pareto}
     Suppose $h(x) \sim a/x$ with $a>1,$ with $h$ eventually monotone. Then under optimality,
    \[x_k =r^{k+o(k)},\] where $r$ is the unique solution to 
    \begin{equation}\label{eq:fp}
        r^a=a(r+1)-1
    \end{equation}
    such that $r>1.$
    Moreover, if $h(x) = a/x + O(x^{-(1+\epsilon)})$ for some $\epsilon>0,$ then $x_k \asymp r^k.$
\end{proposition}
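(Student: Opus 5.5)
We work with the ratios $r_k := x_{k+1}/x_k$, which by Proposition~\ref{prop:lspfacts} are strictly larger than $1$ and by Lemma~\ref{lemma:noexplode} are bounded above by some $M$ for large $k$. The plan is to show that the recurrence (\ref{eq:hazardrecurrence}) becomes, asymptotically, a one-dimensional map on these ratios. Its only attracting fixed point larger than $1$ is the root of (\ref{eq:fp}), and the other fixed point, $r=1$, will be ruled out by optimality.

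\textbf{Step 1: the asymptotic map.} Because $h$ is eventually monotone and $xh(x)\to a$, we have $x_kh(x_k)\to a$. Uniformity over ratios in $[1,M]$ gives
\[
\int_{x_{k-1}}^{x_k} h = a\log r_{k-1}+o(1).
\]
Substituting into (\ref{eq:hazardrecurrence}), which reads $\log((1+r_k)x_kh(x_k)-1)=\int_{x_{k-1}}^{x_k}h$, we get
\[
a(1+r_k)-1 = r_{k-1}^a\,(1+o(1)).
\]
Equivalently $r_k=\phi(r_{k-1})+o(1)$, where $\phi(s)=(s^a+1)/a-1$. The function $\phi$ is convex with $\phi(1)=2/a-1<1$ for $a>1$. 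Its fixed points are $s=1$ only when $a=2$; in general the fixed points solve $s^a=a(s+1)-1$. The equation $s^a=a(s+1)-1$ has exactly one root $r>1$, because $s^a-as$ is convex. This root is \emph{repelling} for $\phi$: at $r$ we have $\phi'(r)=r^{a-1}>1$, which follows from $r^a=a(r+1)-1>ar$. Read the other way, the dynamics are contracting: we have $r_{k-1}=\phi^{-1}(r_k+o(1))$, and iterating backward is stable. For $s\in(1,r)$ we have $\phi(s)<s$, so $r_k$ would drift down toward $\phi$'s lower fixed point (if it exists) or below $1$. For $s>r$ we have $\phi(s)>s$, so $r_k$ grows geometrically and eventually exceeds $M$, contradicting Lemma~\ref{lemma:noexplode}.

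\textbf{Step 2: ruling out the two escapes.} This is the main obstacle. Escape upward is excluded by Lemma~\ref{lemma:noexplode}. Escape downward must be excluded by a separate argument, and I would treat it like Theorem~\ref{thm:increments} (that theorem concerns $\Delta_k$, but its dichotomy-style argument applies): if $\liminf r_k<r$, then some tail of the ratios stays below $r-\delta$ and $\phi$ pushes them monotonically down. They would then either approach $1$, meaning $x_{k+1}/x_k\to1$, or violate $r_k>1$. In the limit $r_k\to 1$ the recurrence forces $2a-1=1$, i.e.\ $a=1$, which is false. In the other case, $r_k<1$ contradicts the monotonicity in Proposition~\ref{prop:lspfacts}. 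A cleaner alternative is to use a competitor sequence as in (\ref{eq:competition}): replace the tail by a geometric sequence of ratio $r$ and show the cost strictly decreases, using (\ref{eq:powerbound})-type two-sided bounds $G(u)/G(v)=(u/v)^{-a+o(1)}$. Making this robust to the $o(1)$ errors requires a quantitative trapping argument: fix $\delta$, note that the errors are eventually below the drift margin $|\phi(s)-s|$ on $\{|s-r|\ge\delta\}$, and conclude $\limsup|r_k-r|\le\delta$. Since $\delta$ is arbitrary, $r_k\to r$, and therefore $\log x_k=k\log r+o(k)$.

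\textbf{Step 3: the refined claim.} Assume $h=a/x+O(x^{-1-\epsilon})$. Since $x_k$ grows geometrically by Step 2, the errors become summable. The error in the integral is $O(x_{k-1}^{-\epsilon})$, and the error in $x_kh(x_k)$ is likewise $O(x_k^{-\epsilon})$; both decay geometrically in $k$. Linearizing the backward-contracting map around $r$ then gives $|r_k-r|\le C\theta^k$ for some $\theta<1$. To get this, run the recurrence backward from large indices: the forward instability at $r$ becomes a contraction backward, so the deviations are controlled by the geometric tail of the errors. It follows that $\sum_k|\log r_k-\log r|<\infty$, so $\log x_k-k\log r$ converges, i.e.\ $x_k\asymp r^k$.
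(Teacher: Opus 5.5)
Your proposal is correct and follows essentially the same route as the paper. The paper derives the same asymptotic map $r_{k+1}=g(r_k)+o(1)$ with $g=\phi$. It then pins the ratios to $r$ using the bound from Lemma~\ref{lemma:noexplode} and $r_k>1$, via a $\limsup$/$\liminf$ argument equivalent to your drift-margin trapping. Finally, it gets $x_k\asymp r^k$ from the same backward contraction $s\mapsto(a(1+s)-1)^{1/a}$ with geometrically decaying errors.

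One slip to fix: $s=1$ is a fixed point of $\phi$ only when $a=1$ (not $a=2$). For $a>1$ the root $r$ is the only fixed point of $\phi$ on $(0,\infty)$, so the ``other fixed point'' framing should be dropped, though your trapping argument does not depend on it.
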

The proof is deferred to Sec.~\ref{sec:prop2}.

\subsection{Extension to compact intervals}
If $p$ is instead supported only on a compact interval, say $[-1,1]$ WLOG, the LSP may seem qualitatively different, at least \emph{prima facie}. However, a result closely analogous to Theorem~\ref{thm:asymptotics} holds and characterizes the asymptotics as $x \to 1$ for regular and monotonic hazards. A subtlety is that we must first establish that the sequence of optimal turning points does not terminate; the question of whether or not the sequence terminates was already answered in Ref.~\cite{baston1995} (indeed, in greater generality than shown here), but we present the following simple results for clarity.

\begin{proposition}\label{prop:nontermination}
Let $p>0$ be a symmetric, continuous probability density on $(-1,1)$ such that $\lim_{x \uparrow 1} p(x) =0.$ If $p$ is monotone on some interval $[A,1)$ with $A\in (0,1),$ then the sequence $x_k$ does not terminate (i.e.\, $x_k <1 \, \forall k$).
\end{proposition}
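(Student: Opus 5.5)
We argue by contradiction: suppose the optimal sequence terminates, i.e.\ there is a first index $n$ with $x_n = 1$ (turning points beyond $1$ are pointless, since $G(x)=0$ for $x\ge 1$). Then $x_{n-1}<1$, and the searcher's last two legs are: go out to $x_{n-1}$ on one side, then sweep across the origin all the way to $1$ on the other side, then return across the origin to $1$ on the first side. The idea is to show that stopping short of $1$, at a point $y<1$ close to $1$, and inserting one extra excursion strictly decreases the objective (\ref{eq:objective}). The construction must exploit the facts that $p(x)\to 0$ as $x\uparrow 1$ and that $p$ is monotone (hence decreasing) near $1$.

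The concrete competitor replaces $x_n=1$ by $y\in(\max(x_{n-1},A),1)$ and sets $x_{n+1}'=1$, $x_{n+2}'=1$ (the latter term contributing nothing since $G(1)=0$). Writing $J$ via (\ref{eq:objective}), only the terms with indices $n$, $n+1$, $n+2$ change. The original tail contribution is $x_n(G(x_n)+G(x_{n-1})) + x_{n+1}(G(x_{n+1})+G(x_n)) = G(x_{n-1})$, using $x_{n+1}=1$ and $G(1)=0$. The modified tail contribution is $y(G(y)+G(x_{n-1})) + 1\cdot(0+G(y))$. Hence
\[
J[\text{new}]-J[\text{old}] = (y-1)G(x_{n-1}) + (1+y)G(y).
\]
Set $\delta=1-y$. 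Then the difference is $-\delta G(x_{n-1}) + (2-\delta)G(1-\delta)$. By monotonicity of $p$ on $[A,1)$ and $p\to0$, $p$ is decreasing there, so $G(1-\delta)=\int_{1-\delta}^1 p \le \delta\, p(1-\delta)$. The difference is therefore at most $\delta\bigl(2p(1-\delta) - G(x_{n-1})\bigr)$. Since $G(x_{n-1})>0$ is fixed (because $p>0$ and $x_{n-1}<1$) while $p(1-\delta)\to0$, this is strictly negative for small $\delta$. That contradicts optimality, so no $x_k$ equals $1$.

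The main obstacle is bookkeeping: one must check that the competitor is admissible, i.e.\ increasing and alternating, which holds since $x_{n-1}<y<1$, and that the terminating case is correctly encoded in (\ref{eq:objective}). This requires handling the convention that after reaching $1$ on one side the searcher must still reach $1$ on the other side; if instead termination means the last point $x_n=1$ is preceded by a covering of the opposite side, the same one-step comparison applies with indices shifted. One should also note why $p\to0$ is needed: for the uniform density, $2p(1-\delta)$ need not be smaller than $G(x_{n-1})$, consistent with Beck's result that the endpoints are visited directly.
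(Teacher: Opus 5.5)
Your proposal is correct and is essentially the paper's argument. The paper uses the same competitor: it replaces the first visit to the endpoint by a point $t$ near $1$ and then visits both endpoints, with your $n$ corresponding to its $N-1$. It obtains the same difference $(t-1)G(x_{N-2})+(1+t)G(t)\ge 0$ and reaches the same contradiction via $G(t)\le(1-t)p(t)$ and $p(t)\to 0$. Your explicit checks, that $p$ must be decreasing near $1$ (since $p>0$ and $p\to 0$) and that $y>\max(x_{n-1},A)$ keeps the competitor admissible, are if anything slightly more careful than the paper's write-up.
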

\begin{proof}
    Suppose the contrary; then $x_{N-1} = x_N = 1$ for some $N.$ Consider a second sequence $\{y_k\}$ terminating at $N+1$ with $y_k = x_k$ for $0 \le k \le N-2,$ $y_{N-1} = t > x_{N-2},$ and $y_{N} = y_{N+1} = 1.$ Then
\[
J[y] - J[x] =  (t-1) G(x_{N-2}) + (t + 1) G(t) \ge 0,
\]
so for any $t\in[0,1),$ $G(t)/(1-t) \ge G(x_{N-2})/(1+t).$ 
But for $t$ close enough to 1, $p$ is monotonically decreasing and thus we can take $G(t)/(1-t) \le p(t)$ and hence
\[
p(t) \ge G(x_{N-2})/(1+t) > \frac{1}{2}  G(x_{N-2}).
\]
But we can always take $t$ close enough to 1 that this is false, contradiction.
\end{proof}

On the other hand, if $p$ does not go to 0 at the boundary of the interval, it is easy to show that the optimal search strategy reaches the boundary in \emph{finite} time, and it does not make sense to talk about large-$k$ asymptotics. 

\begin{proposition}\label{prop:termination}
Let $p>0$ be a symmetric, continuous probability density on $(-1,1)$ such that $L:=\lim_{x \uparrow 1} p(x) >0.$ Then there exists $N \in \mathbb{N}$ such that $x_N = 1.$
\end{proposition}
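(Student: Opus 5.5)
I argue by contradiction, using the same competitor device as in Proposition~\ref{prop:nontermination}. Suppose the sequence never reaches the endpoint, so $x_k<1$ for all $k$. Since every optimal strategy must cover the support, this forces $x_k\uparrow 1$. Put $G(x)=\int_x^1 p(t)\,dt$. By continuity and $L>0$ there is an $A<1$ such that $p(t)\ge L/2$ on $[A,1)$, and hence $G(t)\ge \tfrac{L}{2}(1-t)$ there. Conversely, $p$ is bounded near $1$ (it is continuous with a finite limit), so $G(t)\le C(1-t)$ on $[A,1)$.

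\emph{Recurrence step.} First I use (\ref{eq:recurrence}) at index $k$ with $x_{k-1},x_k\ge A$:
\[(x_k+x_{k+1})p(x_k)=G(x_k)+G(x_{k-1}).\]
The left side is at least $(x_k+x_{k+1})L/2$, which tends to at least $L>0$ as $x_k,x_{k+1}\to1$. The right side is at most $G(x_k)+G(x_{k-1})\le 2C(1-x_{k-1})$, which tends to $0$. This is a contradiction, so the sequence must hit $1$ at some finite $N$.

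\emph{Main obstacle.} The delicate point is whether (\ref{eq:recurrence}) may be used. It is a first-order condition, and differentiating $J$ in $x_k$ requires $x_k$ to be an interior point, i.e.\ $x_k<1$. That holds under our assumption that the sequence never terminates, and $G$ is differentiable there since $p$ is continuous, so the stationarity condition is legitimate.

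\emph{Fallback.} If stationarity at interior points were in doubt, I would instead use a direct competitor. For large $N$, let $y_k=x_k$ for $k\le N$ and $y_{N+1}=y_{N+2}=1$. Then
\[J[y]-J[x]\le (1-x_{N+1})G(x_N)+2G(1)-\sum_{k\ge N+1}(x_k+x_{k+1})G(x_k).\]
Since $G(1)=0$, this becomes
\[J[y]-J[x]\le (1-x_{N+1})G(x_N)-(x_{N+1}+x_{N+2})G(x_{N+1})-\cdots .\]
Using $G(x_{N+1})\ge \tfrac L2(1-x_{N+1})$ and $G(x_N)\le C(1-x_N)$, a sign comparison should make this negative for large $N$, contradicting optimality. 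Getting the constants in that comparison right is the only part needing care.
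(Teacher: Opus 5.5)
Your main argument is correct and is essentially the paper's proof. Assuming non-termination gives $x_k\uparrow 1$, and then in (\ref{eq:recurrence}) the left side stays bounded below by roughly $L>0$ while $G(x_k)+G(x_{k-1})\to 0$. Your remark that the first-order condition is legitimate at interior points is a detail the paper leaves implicit, and your fallback competitor also closes at once: $J[y]-J[x]\le (1-x_{N+1})\bigl[G(x_N)-\tfrac{L}{2}(x_{N+1}+x_{N+2})\bigr]<0$ as soon as $G(x_N)<\tfrac{L}{2}(x_{N+1}+x_{N+2})$, which holds for large $N$.
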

\begin{proof}
Assume the contrary, so $x_k <1$ for all $k.$ Then $x_k \uparrow 1$ as $k \to \infty.$ Since $p$ is continuous, $G(x) = \int_x^1 p(t) \, dt \to 0.$ But
\[
(x_k + x_{k+1}) p(x_k) = G(x_k) + G(x_{k-1}),
\]
and the LHS$\to 2L>0$ while the RHS$\to 0,$ contradiction.
\end{proof}

If the hazard is sufficiently regular and goes to infinity at the boundary sufficiently quickly, we have the following theorem, which we prove in Sec.~\ref{sec:thm3}.
\begin{theorem}\label{thm:bounded}
Let $p>0$ be a symmetric, continuous probability density on $(-1,1)$ such that $\lim_{x \uparrow 1} p(x) =0.$ Let $G(x) = \int_{x}^1 p(t) \, dt$ and $h(x) = p(x) /G(x).$ Let $\{x_k\}$ be an optimal strategy for Problem~\ref{prob:lsp}, and let $\Delta_k := x_k - x_{k-1}.$ If, on some interval $[A,\infty)$ with $A>0,$ $\tilde h(x) := h(1-1/x)$ is monotone and either regularly varying of index $\rho > 1$ or in de Haan's class $\Gamma,$ then the asymptotics Eq.~\ref{eq:delta_asymp}-\ref{eq:int} hold.
\end{theorem}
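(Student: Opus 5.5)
The plan is to reduce Theorem~\ref{thm:bounded} to the unbounded analysis behind Theorem~\ref{thm:asymptotics} by the change of variables $u = 1/(1-x)$, which sends $[A',1)$ to $[A,\infty)$. First, Proposition~\ref{prop:nontermination} guarantees that $x_k<1$ for all $k$. Once $h$ is eventually monotone, the same holds for $p$ near $1$: in the $\Gamma$ case this follows from the argument of the preceding Proposition applied to $\tilde h$, together with the Jacobian computation below. Hence $x_k\uparrow 1$, and we may write $u_k := 1/(1-x_k)\to\infty$.

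Next, rewrite the recurrence (\ref{eq:hazardrecurrence}) in the new variable. With $dx = du/u^2$,
\[
\int_{x_{k-1}}^{x_k} h(x)\,dx = \int_{u_{k-1}}^{u_k} \frac{\tilde h(u)}{u^2}\,du .
\]
Set $H(u) := \tilde h(u)/u^2$. Then $H$ is regularly varying of index $\rho-2>-1$, or lies in $\Gamma$ with the same auxiliary function up to a negligible correction, since $u^{-2}$ is slowly varying relative to the $a(u)=o(u)$ scale. The left side is $\log((x_k+x_{k+1})h(x_k)-1)$. Because $x_k\to1$, we have $x_k+x_{k+1}=2+o(1)$, so the left side equals $\log(2h(x_k)(1+o(1)))$, which is $\log(2x_kh(x_k))+o(1)$ as $x_k\to 1$. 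The recurrence therefore becomes $\log(2 h(x_k)) + o(1) = \int_{u_{k-1}}^{u_k} H$. This has exactly the structure of the unbounded problem, with $H$ playing the role of the hazard and a slowly varying prefactor change. The hypothesis $\rho>1$ is precisely what makes $H$ satisfy $uH(u)\to\infty$, the analogue of $h=\omega(1/x)$ in Theorem~\ref{thm:asymptotics}, since $uH(u)=\tilde h(u)/u$ has index $\rho-1>0$.

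I then rerun the proof of Theorem~\ref{thm:asymptotics} in the $u$-variable. The first step is an analogue of Lemma~\ref{lemma:noexplode} and Corollary~\ref{cor:noexplode}, giving $u_{k+1}\le M u_k$ for every $M>1$ eventually. I would prove this with the competitor argument (\ref{eq:competition}), choosing geometric competitors in $u$, namely $1-y_{N+j} = (1-x_N)r^{-j}$. The bound (\ref{eq:powerbound}) then becomes $G \le G(v)(u/v)^{-c}$ for every $c$, because $\int H\,du$ grows faster than any multiple of $\log u$. The second step is to show $\Delta^u_k := u_k-u_{k-1}=o(1/H(u_k))\cdot\log$, and more precisely $\Delta^u_k H(u_k)\sim \log(2h(x_k))$. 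This uses uniform convergence for regular variation, or self-neglect for $\Gamma$, to get $\int_{u_{k-1}}^{u_k}H = \Delta^u_k H(u_k)(1+o(1))$ whenever $\Delta^u_k = O(\log(\cdot)/H)$, which is $o(u_k)$ and $o(a(u_k))$. Since $\Delta_k = \Delta^u_k/u_k^2\,(1+o(1))$ and $H(u_k)u_k^2 = h(x_k)$, this yields (\ref{eq:delta_asymp}). Summing $1 \sim \Delta^u_k H(u_k)/\log(u_k^2H(u_k))$ by a Riemann-sum comparison gives $k\sim\int^{u_k} H/\log(u^2H)\,du$, and changing variables back gives (\ref{eq:int}) up to a $\log(xh)$ versus $\log h$ discrepancy, which is negligible because $x\to1$.

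The main obstacle is the a priori control showing that $\Delta_k^u$ is small relative to $u_k$ (or to $a(u_k)$), which is needed to replace the integral by $\Delta^u_k H(u_k)$. In the unbounded case this is where Theorem~\ref{thm:increments} and Corollary~\ref{cor:noexplode} are used. Here I would first try to transfer them verbatim, checking that the competitor sums still converge geometrically in $u$ and that the $1+o(1)$ factors coming from $x_k+x_{k+1}\to2$ do not disturb the bootstrap.
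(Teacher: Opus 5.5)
\paragraph{Gap: the competitor transfer.} The gap is in the step you yourself flag as the main obstacle: the a priori control $u_k/u_{k-1}\to 1$ (equivalently $\Delta_k=o(1-x_k)$), together with $u_k-u_{k-1}=o(a(u_k))$ in the $\Gamma$ case. A verbatim transfer of Lemma~\ref{lemma:noexplode} does not give it.

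In (\ref{eq:competition}) the error term carries weights $y_k+y_{k+1}\approx 2$, not weights of size $1-x_N$. Take $1-y_{N+j}=(1-x_N)r^{-j}$ and a fixed-exponent bound $G(y_{N+j})\le G(x_N)r^{-cj}$. You then only obtain $1-x_{N+1}\ge (1-x_N)/r-2r^{-c}/(1-r^{-c})$. This is a constant correction against a main term tending to $0$, so nothing follows. In the unbounded case the error scaled like $x_N$, which is why it was harmless there.

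For the error to be $o(1-x_N)$ you need $\int_{u_N}^{ru_N}H\,du-\log u_N\to\infty$. That is a quantitative condition of the type $\tilde h(u)/u\gg\log\tilde h(u)$. The property you single out as ``the analogue of $h=\omega(1/x)$'', namely $uH(u)\to\infty$, is genuinely insufficient. For $\tilde h(u)=u\log u$ one has $uH(u)=\log u\to\infty$, yet (\ref{eq:hazardrecurrence}) reads $\tfrac12(\log^2u_k-\log^2u_{k-1})=\log u_k+O(\log\log u_k)$. Hence $u_k/u_{k-1}\to e$ and $\Delta_k h(x_k)/\log(2x_kh(x_k))\to e-1$, violating (\ref{eq:delta_asymp}).

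You also use this quantitative condition tacitly when you assert $\log(u_k^2H(u_k))/H(u_k)=o(u_k)$ in your second step. That step further presupposes an a priori bound $\Delta^u_k=O(\log(\cdot)/H(u_k))$ whose source you do not supply.

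\paragraph{The missing idea.} The bounded case needs no competitor at all. Since $x_k+x_{k+1}\le 2$, (\ref{eq:hazardrecurrence}) gives directly $\int_{u_{k-1}}^{u_k}H\,du=\int_{x_{k-1}}^{x_k}h\,dx\le\log(2\tilde h(u_k))$, with no information about $x_{k+1}$ required.

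Suppose $u_k\ge(1+\theta)u_{k-1}$ along a subsequence. Monotonicity of $\tilde h$ (necessarily increasing, since $\tilde h\to\infty$) bounds the same integral below by $\theta(1-x_k)\,\tilde h(u_k/(1+\theta))$. For $\tilde h\in\mathcal{R}(\rho)$ with $\rho>1$ this is of order $u_k^{\rho-1}L(u_k)\gg\log\tilde h(u_k)$, a contradiction. This is essentially the paper's argument.

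In the $\Gamma$ case, run the same comparison on the scale $a(u_k)$. If $u_k-u_{k-1}\ge ra(u_k)$, then $\int_{u_{k-1}}^{u_k}H\gtrsim(1-e^{-r})a(u_k)H(u_k)$. By the analogue of (\ref{eq:claim2}) this exceeds $\log(2\tilde h(u_k))$, again a contradiction.

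Then $\tilde h(u_{k-1})\sim\tilde h(u_k)$ follows by uniform convergence, or by Definition~\ref{def:gamma} in the $\Gamma$ case. The sandwich (\ref{eq:monotone}) then yields (\ref{eq:delta_asymp}); apply it in the $x$ variable, where $h$ is monotone, since your $H=\tilde h/u^2$ need not be. After that, your change of variables and Riemann-sum step for (\ref{eq:int}) go through.
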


Once again, we find that $h(\epsilon) \asymp 1/\epsilon$ (with $\epsilon =1-x$) represents a boundary case which must be treated differently than the others. To study this case, it is illuminating to first establish the following simple Lemma, the analog to Lemma~\ref{lemma:hazard}:
\begin{lemma}
Let $p$ be a continuous and eventually monotone density on $(0,1)$ with $p\to 0$ as $x \uparrow 1.$ Then the hazard obeys $h(x) \ge 1/(1-x)$ for $x\in(0,1)$ close enough to 1.
\end{lemma}
\begin{proof}
    Since $p\to 0$ and is positive, $p$ is monotone decreasing on $[A,1)$ for some $A>0.$ Then for all $x\in[A,1),$
    \[
    G(x) = \int_x^1 p(t) \, dt \le (1-x) p(x)
    \]
and so $h(x) = p(x)/G(x) \ge 1/(1-x).$
\end{proof}

Note the important distinction: due to the fact that finite expectation is not a useful control for densities on compact intervals, we are unable to guarantee that $h(x) \ge c/(1-x)$ with $c>1$ \emph{strictly}. In the event that $c>1,$ which corresponds to densities going to 0 like a power law at the boundary, we can prove the following proposition, showing that the residual $1-x_k$ decays doubly exponentially.
\begin{proposition}\label{prop:bounded_powerlaw}
Let $p>0$ be a symmetric, continuous probability density on $(-1,1).$ If the hazard $h$ satisfies $h \sim c/(1-x)$ as $x \to 1$ for some $c>1,$ then 
\[\log(1- x_k) \asymp -\left(\frac{c}{c-1}\right)^k.
\]
If, furthermore, $h(x)= c/(1-x) + O((1-x)^{\delta -1})$ for some $\delta>0,$ then 
\[
1-x_k \sim 2c \exp\left(-A \left(\frac{c}{c-1}\right)^k\right)
\]
for some $A>0.$
\end{proposition}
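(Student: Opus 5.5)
The plan is to work in the variable $\epsilon_k := 1-x_k$ and to turn the hazard recurrence (\ref{eq:hazardrecurrence}) into an approximately linear recurrence for $u_k := -\log \epsilon_k$.

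\textbf{Preliminaries.} Since $h \sim c/(1-x)$, we have $G(x) = (1-x)^{c+o(1)}$ and $p = hG \to 0$ at the boundary. The sequence therefore does not terminate, by Proposition~\ref{prop:nontermination} or directly by \cite{baston1995}. Because the strategy must exhaust the support, $x_k \uparrow 1$, so $\epsilon_k \downarrow 0$ and $u_k \uparrow \infty$.

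\textbf{Step 1: the recurrence for $u_k$.} On the left of (\ref{eq:hazardrecurrence}) we have $x_k + x_{k+1} = 2 + O(\epsilon_k)$ and $h(x_k) = (c+o(1))/\epsilon_k$. Hence
\[
\log\bigl((x_k+x_{k+1})h(x_k)-1\bigr) = u_k + \log(2c) + o(1).
\]
On the right, $h(x) = (c+o(1))/(1-x)$ uniformly on $[x_{k-1},x_k]$, so
\[
\int_{x_{k-1}}^{x_k} h = c\,(u_k-u_{k-1})\,(1+o(1)).
\]
Comparing the two sides gives $c(u_k-u_{k-1})(1+o(1)) = u_k + O(1) \to \infty$. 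In particular the increments $u_k-u_{k-1}$ diverge, so the additive $O(1)$ terms are negligible relative to them. Dividing through yields
\[
(c-1)\,u_k = c\,u_{k-1}\,(1+o(1)), \qquad \text{i.e.} \qquad \frac{u_k}{u_{k-1}} \to \lambda := \frac{c}{c-1}.
\]
This already gives $\log u_k = k\log\lambda + o(k)$.

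The step I expect to be the real obstacle is upgrading this to a genuine $\asymp$ statement, which requires $\prod_k(1+\delta_k)$ to stay bounded and away from $0$. With only $h\sim c/(1-x)$ there is no rate on $\delta_k$. I would first try to extract one by bootstrapping: the doubly exponential decay of $\epsilon_k$ means that any error controlled by a fixed power of $\epsilon_{k-1}$ is summable. Failing that, I would read the first claim as $\log(-\log(1-x_k)) = k \log\lambda + o(k)$, which is what Step 1 proves rigorously.

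\textbf{Step 2: the refined hypothesis.} Assume $h = c/(1-x) + O((1-x)^{\delta-1})$. Then every error term above becomes explicit. On the left, $(x_k+x_{k+1})h(x_k) - 1 = \tfrac{2c}{\epsilon_k}\bigl(1+O(\epsilon_k^{\min(\delta,1)})\bigr)$. On the right,
\[
\int_{x_{k-1}}^{x_k} h = c\,(u_k-u_{k-1}) + O(\epsilon_{k-1}^\delta).
\]
Setting $w_k := u_k + \log(2c)$ gives the exact linear recurrence
\[
w_k = \lambda w_{k-1} + e_k, \qquad e_k = O\bigl(\epsilon_{k-1}^{\delta'}\bigr), \quad \delta' := \min(\delta,1).
\]
By Step 1, $\epsilon_{k-1}$ decays doubly exponentially, so $\sum_j |e_j|\lambda^{-j}$ converges extremely fast. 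Unrolling the recurrence gives $w_k\lambda^{-k} \to A := w_{K}\lambda^{-K} + \sum_{j>K} e_j\lambda^{-j}$ for any fixed $K$, and
\[
w_k - A\lambda^k = -\sum_{j>k} e_j \lambda^{k-j} \to 0.
\]
Here $A>0$: the constant $A$ does not depend on $K$, and choosing $K$ large makes $w_K > 0$ and the tail sum negligible compared with $w_K\lambda^{-K}$.

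\textbf{Conclusion.} Exponentiating, $1-x_k = e^{-u_k} = 2c\,e^{-w_k}$. Since $w_k - A\lambda^k \to 0$, this gives $1 - x_k \sim 2c\exp(-A\lambda^k)$, which is the second claim. It also yields $\log(1-x_k) \asymp -\lambda^k$ in this refined setting.
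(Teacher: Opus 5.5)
\textbf{Second claim.} Your Step 2 is correct and is essentially the paper's argument. The paper writes $G(1-\epsilon)=K\epsilon^c(1+O(\epsilon^\delta))$ and reads off $(\epsilon_{k-1}/\epsilon_k)^c=\frac{2c}{\epsilon_k}(1+o(1))$ from (\ref{eq:hazardrecurrence}); after taking logarithms this is your $w_k=\lambda w_{k-1}+e_k$. The paper's intermediate constant $\frac{\log 2c}{1-c}$ should be $\frac{\log 2c}{c-1}$; your shift $w_k=u_k+\log 2c$ is the right one and agrees with its final formula. Your argument for $A>0$ is more careful than the paper's, which only asserts it. You do not need the doubly exponential decay: $e_k\to0$ and $\lambda>1$ already give $|\sum_{j>k}\lambda^{k-j}e_j|\le\sup_{j>k}|e_j|/(\lambda-1)\to0$.

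One small point in your preliminaries. Proposition~\ref{prop:nontermination} assumes $p$ is eventually monotone, which $h\sim c/(1-x)$ alone does not give. Its competitor argument, however, only needs $G(t)/(1-t)\to0$, and this follows from $G(1-\epsilon)=\epsilon^{c+o(1)}$ with $c>1$.

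\textbf{First claim.} You stop at $u_k/u_{k-1}\to\lambda$, and you are right to. Under the bare hypothesis $h\sim c/(1-x)$, the claim $\log(1-x_k)\asymp-\lambda^k$ (bounded ratios, the paper's meaning, cf.\ Proposition~\ref{prop:pareto}) is false. Take $h(1-\epsilon)=\frac{c}{\epsilon}+\frac{b}{\epsilon\log\log(1/\epsilon)}$ for small $\epsilon$, with $b>0$. Then (\ref{eq:hazardrecurrence}) reads $c(u_k-u_{k-1})+b\int_{u_{k-1}}^{u_k}\frac{dv}{\log v}=u_k+O(1)$. Your Step 1 gives $\log u_k-\log u_{k-1}\to\log\lambda$, so this becomes $(c+\beta_k)(u_k-u_{k-1})=u_k+O(1)$ with $\beta_k=\frac{b}{\log u_k}(1+O(1/\log u_k))$. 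Hence
\[
\frac{u_k}{u_{k-1}}=\frac{c+\beta_k}{c-1+\beta_k}+O(u_{k-1}^{-1})=\lambda\Bigl(1-\frac{\beta_k}{c(c-1)}+O(\beta_k^2)\Bigr)+O(u_{k-1}^{-1}).
\]
Since $\log u_k=k\log\lambda+o(k)$, $\beta_k$ is of order $1/k$. Thus $\sum_k\beta_k=\infty$, while $\sum_k\beta_k^2$ and $\sum_k u_k^{-1}$ converge. So $u_k\lambda^{-k}\to0$ like a negative power of $k$, and $\log(1-x_k)$ is not $\asymp-\lambda^k$.

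The same example shows your bootstrapping idea cannot succeed. With only $h(1-\epsilon)-c/\epsilon=o(1/\epsilon)$, the error in Step 1 is an integral of that remainder over $[\epsilon_k,\epsilon_{k-1}]$. It can be a slowly decaying fraction of $u_k-u_{k-1}$ rather than a power of $\epsilon_{k-1}$.

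The paper's proof breaks at exactly the step you flagged. From the Potter bound it obtains $(c\pm\eta)\log(\epsilon_{k-1}/\epsilon_k)+O_\eta(1)$, then lets $\eta\to0$ to claim an $O(1)$ error. This is illegitimate: $C_\eta$ depends on $\eta$, and $\eta\log(\epsilon_{k-1}/\epsilon_k)\to\infty$. What that argument actually proves is your Step 1.

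So your fallback is the correct form of the first claim: $u_k/u_{k-1}\to\lambda$, hence $\log(-\log(1-x_k))=k\log\lambda+o(k)$. It is the exact analogue of $x_k=r^{k+o(k)}$ in Proposition~\ref{prop:pareto}. A genuine $\asymp$ needs extra control on the remainder, such as your refined hypothesis.
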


The proof of Proposition~\ref{prop:bounded_powerlaw} is given in Sec.~\ref{sec:prop5}. However, the boundary case where $h(x) \sim 1/(1-x)$ remains difficult to describe in generality; this corresponds to $p(1-\epsilon) \sim \ell(1/\epsilon)$ for some $\ell$ slowly varying. In this case, the asymptotics of the optimal turning points appear to depend sensitively on the next-to-leading behavior of the hazard. We defer the classification of such cases to future study.

\section{Examples}\label{sec:examples}
Eq.~\ref{eq:int} allows us find the asymptotics of $\{x_k\}$ for essentially \emph{any} well-behaved, eventually monotonic hazard on the real line, except for the degenerate case $h(x) \asymp 1/x$ (power-law tails), which was handled in Proposition~\ref{prop:pareto}. Here are a few interesting examples which do not appear to be known in the literature; they follow from straightforward calculations, whose details we have suppressed for the sake of brevity.

\begin{example}[Compressed/stretched exponential tails]
Suppose, in addition to the hypotheses of Theorem~\ref{thm:asymptotics}, we have $h(x) \sim ax^b$ for any $a>0,b>-1.$ Then under optimality, \[x_k \sim \left(\frac{1+b}{a} k \log k\right)^{1/(1+b)}.\]
\end{example}
Note that the above subsumes as special cases both the exponential ($b=0$) and normal ($b=1$) distributions; the asymptotics of $\{x_k\}$ in the latter case were one of the the main results of Ref.~\cite{beck1984}.

\begin{example}[Lognormal tails]
Suppose, in addition to the hypotheses of Theorem~\ref{thm:asymptotics}, we have $h(x) \sim a \log x/x$ for any $a>0.$ Then under optimality, \[x_k \sim e^{\sqrt{k \log k/a}}.\]
\end{example}
\begin{example}[Gumbel tails] 
Suppose, in addition to the hypotheses of Theorem~\ref{thm:asymptotics}, we have $h(x)\sim e^{ax}$ for $a>0$ (Unlike the previous examples, this hazard is de Haan $\Gamma$ and not RV.) Then under optimality,
\[ x_k \sim \frac{1}{a} \log k.\]
\end{example}

Let us also present two examples of a distribution on a compact interval. First, consider the triangular distribution. It was previously shown in Ref.~\cite{beck1984} that the optimal search strategy in this case has an infinite number of turning points, but no quantitative asymptotic was given. A quick calculation gives $h(x) = 2/(1-x),$ and we can apply Proposition~\ref{prop:bounded_powerlaw} to find that the distance to the interval boundary decays like a double exponential.
\begin{example}[Triangular distribution]
    Let a symmetric probability density $p>0$ be supported on $(-1,1)$ and suppose the hazard obeys $h(x) \sim 2/(1-x) + O((1-x)^{\delta-1})$ for some $\delta>0.$ Then under optimality,
    \[
    1- x_k \sim  4 e^{-A2^k}.
    \]
    for some $A>0.$
\end{example}
Finally, we consider tails decaying like $p(1-\epsilon) \asymp \exp(- a/\epsilon^c + o(1/\epsilon)),$ $c>0.$ Due to the rapid decay of $p$ at the boundary, $1-x_k$ decays much more slowly in this case.
\begin{example}[A fast-decaying family of distributions on $(-1,1)$]
Let a symmetric probability density $p>0$ be supported on $(-1,1)$ and suppose the hazard obeys $h(x) \sim \frac{a}{(1-x)^{1+b}}$ for some $a,b>0.$ Then under optimality,
\[
1-x_k \sim \left(\frac{1+b}{a} k \right)^{-1/b}.
\]
\end{example}

\section{Proof of Theorem \ref{thm:increments}}\label{sec:thm1}
Our proof strategy is similar to that of Lemma~\ref{lemma:noexplode}, in that we construct a competitor sequence which upper bounds the growth of the increments, this time in an $h$-dependent manner. 
\begin{proof}
From (\ref{eq:recurrence}), we have by monotonicity that
\begin{equation} \label{eq:monotone}
    \underline{h}_k := \min \{ h(x_{k-1}),h(x_k) \}  \le \frac{ \int_{x_{k-1}}^{x_k} h(x) \, dx }{\Delta_k}\le  \max \{ h(x_{k-1}),h(x_k) \} := \bar{h}_k.
\end{equation}
Thus we have
\begin{equation}\label{eq:delta_ineq}
\Delta_k \ge \frac{\log\left((x_k+x_{k+1})h(x_k) -1\right)}{\bar{h}_k} \ge \frac{\log\left(2 x_k h(x_k) -1\right)}{\bar{h}_k} \ge \frac{\log\left(x_k h(x_k)\right)}{\bar{h}_k},
\end{equation}
where the last inequality follows from Lemma~\ref{lemma:hazard}. 

We first focus on the case $h(x) = o(\log x).$ If $x_k \ge 2 x_{k-1},$ then $\Delta_k = x_k-x_{k-1} \ge x_k \to \infty.$ On the other hand, if $x_k < 2 x_{k-1},$ then $\log x_k = \log {x_{k-1}} + O(1),$ so $\bar h_k = o(\log x_k)$ regardless of whether $h$ is increasing or decreasing. Hence
\begin{equation} \Delta_k \ge \frac{\log x_k +O(1)}{o(\log x_k)} \to \infty. \end{equation}
This proves the first case.

Now suppose that $h$ is increasing, consistent with $h=\Omega(\log x)$. Then using Eq.~\ref{eq:competition}, $u\ge v$ implies 
\begin{equation}
G(u) = G(v) \exp\left(- \int_{v}^u h(x) \, dx \right) \le G(v) e^{-h(u)(u-v)}.
\end{equation}
Set $y_{N+j} = x_N + r j$ for each $j\ge1,$ with $r = \log(x_N h(x_N))/h(x_N).$ We then have
\[
\frac{G(y_{N+j})}{G(x_N)} \le  \exp(-h(x_N)rj) = (x_N h(x_N))^{-j}.
\]
Inserting into Eq.~\ref{eq:competition}, we find 
\begin{align*}
    x_{N+1} - x_N &\le r + \sum_{j=1}^\infty (2x_N + 2rj + r)(x_N h(x_N))^{-j} \\ &= r + \frac{2x_N + r}{x_N h(x_N)-1}+\frac{2x_N h(x_n) r}{(x_N h(x_N) -1 )^2} \\ &=  r + O\left(\frac{1}{h(x_N)}\right).
\end{align*}
But the choice of $N$ was arbitrary, so we have (in combination with Eq.~\ref{eq:delta_ineq} and using $h$ monotonically increasing)
\begin{equation}\label{eq:delta_bounds}
\frac{\log (x_k h(x_k)) }{h(x_k)} \le \Delta_k \le \frac{\log (x_{k-1} h(x_{k-1})) + O(1) }{h(x_{k-1})}.
\end{equation}
The upper and lower bounds both tend to zero if $h(x) = \omega(\log x).$ On the other hand, by Lemma~\ref{lemma:noexplode}, $\log(x_k) = \log {x_{k-1}} + O(1)$, so if $h(x) \sim c \log x,$ the upper and lower bounds are both equal to $1/c + O(\log \log x_k/\log x_k),$ so it follows that $\Delta_k \to 1/c.$
\end{proof}

\section{Proof of Theorem \ref{thm:asymptotics}}\label{sec:thm2}
Equations~\ref{eq:monotone} and \ref{eq:delta_bounds} are already strongly suggestive of the final result. The main ingredient that extra regularity provides is a guarantee that $h(x_{k-1}) \sim h(x_{k})$ through either the uniform convergence property of regular variation or the $o(x)$ property of the auxiliary functions for de Haan's class $\Gamma.$ The validity of integrating the increments (Eq.~\ref{eq:int}) follows by establishing that the increments do not grow too quickly.
\begin{proof}
Using Corollary~\ref{cor:noexplode} and $h(x) = \omega(1/x)$, we have from (\ref{eq:hazardrecurrence}) that
\[
\int_{x_{k-1}}^{x_k} h(x) \, dx = \log(2x_k h(x_k)) + o(1).
\]
Since $h$ is eventually monotone, we can use Eq.~\ref{eq:monotone} to write
\begin{equation} \label{eq:monotone2}
\frac{\log (2x_k h(x_k)) + o(1)}{\bar{h}_k} \le \Delta_k \le \frac{\log (2x_k h(x_k)) + o(1)}{\underline{h}_k}.
\end{equation}

We first show that $\Delta_k = o(x_k).$ If $h$ is decreasing, then $\underline{h}_k = h(x_{k})$ and Eq.~\ref{eq:monotone2} gives
\[
\frac{\Delta_k}{x_k} \le \frac{\log (x_k h(x_k)) + O(1)}{x_k h(x_k)} \to 0
\]
since $x_k h(x_k) \to \infty.$ On the other hand, with $h$ increasing, Eq.~\ref{eq:delta_bounds} still holds and in particular
\[
\frac{\Delta_k}{x_{k}} \le \frac{\Delta_k}{x_{k-1}} \le \frac{\log (x_{k-1} h(x_{k-1})) + O(1) }{x_{k-1} h(x_{k-1})} \to 0. 
\]

Next, we show that if $h\in \mathcal{R}(\rho)$ is regularly varying, then $h(x_k) \sim h(x_{k-1})$ so that $\bar{h}_k \sim \underline{h}_k,$ yielding Eq.~\ref{eq:delta_asymp} from Eq.~\ref{eq:monotone2}. By the uniform convergence property of regularly varying functions, we have for any $u(x) = o(x)$ that
\[
\frac{h(x+u(x))}{h(x)} = \frac{h(x(1+u(x)/x))}{h(x)} \to \left(1+\frac{u(x)}{x}\right)^\rho \to 1.
\]
Using $u(x_k) = -\Delta_k$ gives $h(x_k-\Delta_k)/h(x_k) = h(x_{k-1})/h(x_k) \to 1.$

Now we prove Eq.~\ref{eq:int} for regularly varying hazard. Let $f(x) = \log (x h(x))/h(x).$ it is easy to see that $f$ is regularly varying of index $-\rho.$ The uniform convergence property then gives 
\[
\left|\frac{f(\lambda x)}{f(x)} - \lambda^{-\rho}\right| \le \eta
\]
uniformly for any $\eta >0,$ for large enough $x.$ It follows that there exists $\delta >0 $ such that $|1-\lambda| \le \delta$ implies
\[
1-\eta \le \frac{f(\lambda x)}{f(x)} \le 1+\eta.
\]
Putting $x= x_k$ and $\lambda = t/x_k$ for $t\in[x_{k-1},x_k],$ we have $\lambda \in [1-\Delta_k/x_k,1],$ so since $\Delta_k /x_k \to 0$ we eventually have 
\[
(1-\eta) f(x_k) \le f(t) \le (1+\eta) f(x_k).
\]
It follows that
\[
\frac{\Delta_k}{(1+\eta) f(x_k)} \le \int_{x_{k-1}}^{x_k} \frac{dt}{f(t)} \le \frac{\Delta_k}{(1-\eta) f(x_k)}.
\]
But $\Delta_k/f(x_k) \to 1,$ so taking $\eta \to 0,$ we find $\int_{x_{k-1}}^{x_k} \frac{dt}{f(t)} \to 1.$ Let $F(x) = \int^x \frac{dt}{f(t)}.$ Then for any $k_0<k,$
\[
F(x_k) - F(x_{k_0}) = \sum_{j=k_0}^{k-1} (F(x_{j+1}) - F(x_{j})) \to k-k_0,
\]
so $F(x_k) \sim k.$ This is the desired result. 

Finally, we move to the case where $h \in \Gamma.$ Let $a$ be the auxiliary function. First, we claim that
\begin{equation} \label{eq:claim}
a(x) = \omega\left( \frac{\log h(x)}{h(x)}\right).
\end{equation}
Suppose the contrary, and there is some $C>0$ such that $a(x) \le C \log h(x)/h(x)$ for large enough $x.$ By Def.~\ref{def:gamma} with $t=1,$ we have
$h(x+a(x))/h(x) \to e.$ Define a sequence $y_{n+1} =y_n + a(y_n) $ for $n \ge 0.$ Iterating then gives 
\begin{equation}\label{eq:wrong}
h(y_n) \sim e^n h(y_0).
\end{equation}
Hence,
\[
a(y_n) \le C \frac{\log h(y_n)}{h(y_n)} \sim C \frac{ n}{e^n h(y_0)},
\]
and it follows that $\sum_{n\ge 0} a(y_n) < \infty$ and therefore $y_n$ converges to a finite limit, contradicting Eq.~\ref{eq:wrong}. 

On the other hand, we claim that $h(x)/x^m \to \infty$ for any $m>0.$ Indeed, if $h$ has auxiliary function $a,$ for any $t>1$ we have $ h(x+ta(x)) \ge e^{t} h(x)$ for large enough $x.$ Define $y_{n+1} := y_n + ta(y_n).$ Then
\[
h(y_n) \ge e^{t} h(y_{n-1}) \ge \dots \ge e^{nt} h(y_0).
\]
But $ta(y_n) \le y_n/m$ since $a(x) = o(x),$ so $y_{n+1} \le (1+1/m) y_n$ and hence $y_n \le x_0 (1+1/m)^n.$
It follows that
\[
\frac{h(y_n)}{y_n^m} \ge \frac{h(y_0)}{y_0^m} \left(\frac{e^{t}}{(1+1/m)^m} \right)^n \to \infty
\]
since $e^t > e > (1+1/m)^m$ for any $m.$ The claim follows from $h$ being monotonic.

Hence, $\log x = o(\log h)$ and we can refine Eq.~\ref{eq:claim} further to 
\begin{equation}\label{eq:claim2}
    a(x) = \omega\left( \frac{ \log (x h(x))}{h(x)}\right).
\end{equation}

Next, note that for $r>0$
\begin{align*}
\int_{x - r a(x)}^x h(t) \, dt &= a(x) h(x) \int_0^r \frac{h(x-sa(x))}h(x) \, ds \\ &\to a(x) h(x) \int_0 ^r e^{-s} \, ds  \\ &= (1- e^{-r})a(x) h(x).
\end{align*}

We now claim that $\Delta_k =o(a(x_k)).$ Otherwise, there is a subsequence $x_{k_j}$ and $r>0$ such that $ \Delta_{k_j} \ge r a(x_{k_j}).$ Then 
\[
\frac{1}{a(x_{k_j}) h(x_{k_j})}\int_{x_{k_j-1}}^{x_{k_j}} h(t) \, dt \ge \int_{x_{k_j}-ra(x_{k_j})}^{x_{k_j}} h(t) \, dt \sim 1- e^{-r}
\]
But combining
\[
\int_{x_{k-1}}^{x_k} h(t) \, dt \le \log (x_k h(x_k)) + O(1)
\]
with Eq.~\ref{eq:claim2} yields
\[
\frac{1}{a(x_k) h(x_k)} \int_{x_{k-1}}^{x_k} h(t) \, dt \to 0,
\]
a contradiction.

Thus, setting $t_k = \Delta_k/a(x_k) = o(1),$ we have
\[
\frac{h(x_{k-1})}{h(x_{k})} =  \frac{h(x_{k-1} + t_k a(x_k))}{h(x_{k})} \to 1
\]
by Def.~\ref{def:gamma}. This means we once again have Eq.~\ref{eq:delta_asymp}.

Finally, we have
\[
\frac{h(x-u(x))}{h(x)} \to 1
\]
uniformly whenever $u(x) = o(a(x))$. But $a(x) = o(x),$ so we further have
\[
\frac{\log ((x-u(x))h(x-u(x)))}{\log (x h(x))} \to 1
\]
and so
\[
\frac{f(x-u(x))}{f(x)} \to 1,
\]
with $f(x) = \log (x h(x))/h(x)$ as before. This allows us to reason similarly as before, so that Eq.~\ref{eq:int} holds.
\end{proof}

\section{Proof of Proposition \ref{prop:pareto}}\label{sec:prop2}
We first need to establish existence/uniqueness of the root of Eq.~\ref{eq:fp}. Then, regular variation is enough to establish exponential growth up to a subleading error term; more precise control on the asymptotics of $h$ in turn is enough to establish that this error term is in fact zero.
\begin{proof}
First, we claim that Eq.~\ref{eq:fp} has a unique solution on $(1,\infty).$ To see this, write $F(x) := (x^a+1)/a-1-x,$ so that solutions satisfy $F(x)=0.$ Then $x>1$ implies 
\[
F'(x) = x^{a-1} - 1 >0,
\]
so $F$ is strictly increasing on $(1,\infty).$ Moreover, $F(1) = 2(1-a)/a <0,$ while $F(x) \to \infty$ for $x\to \infty,$ so the claim follows.

Now let $r_k = x_k/x_{k-1}.$ By Lemma~\ref{lemma:noexplode}, $r_k \in (1,C]$ for some $C>1$ eventually. Because $p$ is measurable and $p>0,$ $h$ is also measurable and $h\in \mathcal{R}(-1)$, so the uniform convergence property of regularly varying functions gives 
\[
\sup_{t\in [1,C]} t\left| \frac{h(tx)}{h(x)} - 1/t \right | = \sup_{t\in [1,C]} \left| \frac{h(tx)}{a/(tx)} - 1 \right |\to 0
\]
and thus
\[
\sup_{x\in[x_{k-1},x_k]} |h(x) - a/x| \le \sup_{x\in[x_k/C,x_k]} |h(x) - a/x| = o(1/x_k).
\]
Hence,
\begin{equation}\label{eq:int_asymp}
    \int_{x_{k-1}}^{x_k} h(x) \, dx = a \int_{x_{k-1}}^{x_k} \frac{dx}{x} + o\left(\int_{x_{k-1}}^{x_k} \frac{dx}{x_k}\right) = a\log r_k + o(1).
\end{equation}

On the other hand,
\begin{align*}
(x_k+x_{k+1})h(x_k) -1 &=x_k h(x_k) (1+r_{k+1}) -1 \\ &= (a+o(1))(1+r_{k+1}) - 1 \\ &= a(1+r_{k+1}) - 1 +o(1),
\end{align*}
where the last step used the boundedness of $r_{k+1}.$ Thus, from (\ref{eq:hazardrecurrence}) and Eq.~\ref{eq:int_asymp}, we have
\begin{equation}\label{eq:asymp_recur}
r_{k+1} = \frac{r_k^a+1}{a} -1 + o(1) = g(r_k) +o(1),
\end{equation}
where we have defined $g(x) = (x^a+1)/a-1.$

Next, let $m:= \liminf_{k\to \infty} r_k$ and $M:= \limsup_{k\to\infty}.$ By Lemma~\ref{lemma:noexplode}, $M< \infty.$ We also have $m>1$; if not, then there is a subsequence $k_j$ such that $r_{k_j} \to 1,$ and \begin{equation}\label{eq:subseq}
\int_{x_{k_j-1}}^{x_{k_j}} h(x) \, dx \sim a \log r_{k_j} \to 0.
\end{equation}
But then, using (\ref{eq:hazardrecurrence}) and $xh(x) \to a$,
\begin{align*}
\exp\left(\int_{x_{k-1}}^{x_k} h(x)\, dx\right) &=  x_k h(x_k) (1+r_{k+1})-1 \\ & \to a (1+r_{k+1}) -1 \\ &>\log (2a-1) >1,
\end{align*}
contradicting Eq.~\ref{eq:subseq}.

Now, choose a subsequence $k_j$ with $r_{k_j} \to M.$ From Eq.~\ref{eq:asymp_recur}, $r_{k_j+1} \to g(M).$ But $\limsup_{j\to\infty} r_j \le M,$ so $g(M) \le M,$ and $F(M) \le 0.$ Since $F$ is increasing and $F(r) = 0,$ we conclude $M\le r.$ 

Similarly, along a subsequence with $r_{k_j} \to m,$ we get $r_{k_j+1} \to g(m)$ and \emph{mutatis mutandis} we conclude $m \ge r.$ Therefore, $r_k \to r.$

Finally, since $x_k = x_0 \prod_{j=1}^k r_j,$
\[
\frac1k \log x_k = 1/k \log x_0 + \sum_{j=1}^{k} \log r_j \to \log r
\]
so
\[
x_k = \exp(k(\log r + o(1))) = r^{k + o(k)}.
\]
This proves the first part.

If $h(x) - a/x = O(x^{-(1+\epsilon)}),$
then
\[
\int_{x_{k-1}}^{x_k} h(x) \, dx = a\log r_k + O\left(x_{k-1}^{-\epsilon}\right)
\]
and
\[
(x_k+x_{k+1}) h(x_k) -1 = a(1+r_{k+1}) -1 + O(x_k^{-\epsilon}).
\]
Since $r_{k+1}$ is eventually bounded, we then have
\[
\log ( (x_k+x_{k+1}) h(x_k) -1) = \log (a(1+r_{k+1}) -1 ) + O(x_k^{-\epsilon})
\]
and so
\[
a \log r_k = \log (a(1+r_{k+1}) -1 ) + O(x_{k-1}^{-\epsilon}),
\]
or
\[
r_k^a = a(1+r_{k+1}) -1 + O(x_{k-1}^{-\epsilon}).
\]
Put $G(x) : = (a(1+x)-1)^{1/a}$ and call the error term $\xi_k.$ We have 
\[
G'(x) = (a(1+x)-1)^{1/a-1} = H(x)^{a-1}
\]
so $G'(r) = r^{1-a} \in (0,1),$ so $G$ is a contraction mapping.

Since $G(r) = r$ we have
\[
r_k - r = G(r_{k+1}) - G(r) + \xi_k
\]
and so
\[
|r_k -r| \le \lambda |r_{k+1} -r| + |\xi_k|
\]
with $\lambda \in (0,1).$ It follows that 
\[
|r_k -r| \le \lambda^n |r_{k+n} -r| + \sum_{j=0}^{n-1} \lambda^j|\xi_{k+j}|
\]
for each $n\ge1.$ Taking $n \to \infty,$ we conclude
\[
|r_k -r| \le \sum_{j=0}^{\infty}\lambda^j|\xi_{k+j}|.
\]
We already showed that $\liminf r_k >1,$ so put $r_k \ge 1+\delta$ for some $\delta >0.$ Then $x_{k+j} \ge x_k (1+\delta)^j$ so $|\xi_{k+j}| \le C x_k^{-\epsilon}(1+\delta)^{-j\epsilon}$ for some finite $C>0.$ It follows that $\sum_{j=0}^{\infty}\lambda^j|\xi_{k+j}| \le C' x_k^{-\epsilon}$ for some finite $C',$ and therefore $|r_k - r| = O(x_k^{-\epsilon}). $

Finally, we have
\[
\frac{x_k}{r^k} = x_0 \prod_{j=1}^k \frac{r_j}{r}
\]
which converges iff $\sum_{j=1}^k \log \frac{r_j}{r}$ converges. But $\log (r_j/r) = (r_j-r)/r + O((r_j-r)^2) = O(r_j -r ),$ and since $|r_j-r| = O(x_k^{-\epsilon})$ with $x_{k+j} \ge (1+ \delta)^j x_k$ for large enough $k,$ the series is summable. Hence $x_k \sim C r^k$ for some $C.$
\end{proof}

\section{Proof of Theorem~\ref{thm:bounded}}\label{sec:thm3}
The proof is essentially the same structurally as that of Theorem~\ref{thm:asymptotics}; once again, the main point is to establish that $h(x_{k}) \sim h(x_{k-1})$ using the uniform convergence property, although the fine details differ.
\begin{proof}
First, Proposition~\ref{prop:nontermination} shows that the $x_k$ are non-terminating. Now let $\epsilon = 1-x,$ $z=1/\epsilon.$ 

We first prove that $\Delta_k/(1-x_k) \to 0.$ Suppose, on the contrary, that $\Delta_k \ge \theta \epsilon_k$ for some $\theta \in (0,1)$ along some subsequence. Then on that subsequence, 
\[ x_{k-1} = x_k - \Delta_k \le 1 - (1+ \theta) \epsilon_k,\] so
\begin{equation*} 
\int_{x_{k-1}}^{x_k} h(t) \, dt \ge \int_{1-\epsilon_k }^{1-(1+\theta) \epsilon_k} h(t) \, dt = \int_{\epsilon_k}^{(1+\theta)\epsilon_k}  \tilde h(1/s) \, ds.
\end{equation*}
Since $\tilde h$ is monotone, we have for all $s\in [\epsilon_k,(1+\theta)\epsilon_k]$ that
\[
\tilde h(1/s) \ge \tilde h\left(\frac{z_k}{1+\theta} \right),
\]
so
\[
\int_{x_{k-1}}^{x_k} h(t) \, dt \ge \theta \epsilon_k \tilde h\left(\frac{z_k}{1+\theta} \right),
\]
On the other hand, $x_k + x_{k+1} \le 2,$ so (\ref{eq:hazardrecurrence}) gives
\[
\int_{x_{k-1}}^{x_k} h(t) \, dt \le \log (2 h(x_k)) = \log (2 \tilde h(z_k)),
\]
so 
\begin{equation} \label{eq:theta}
    \log (2 \tilde h(z_k)) \ge \frac{ \theta \tilde h(z_k/(1+\theta))}{z_k}.
\end{equation}

Now suppose that $\tilde h \in \mathcal{R}(\rho).$ Choosing $\lambda = 1/(1+\theta)>0,$ we have 
$\tilde h(\lambda z)/\tilde h(z) \to \lambda^\rho $ as $z\to \infty,$ and hence $\tilde h(z_k/(1+\theta)) \sim (1+\theta)^{-\rho} \tilde h(z_k).$ Using $\tilde h(z) \sim z^\rho L(z)$ (with $L$ slowly varying), we then have
\[
\frac{z_k \log (2\tilde h (z_k))}{\theta \tilde h(z_k/(1+\theta))} \sim \frac{(1+\theta)^\rho z_k \log (2\tilde h (z_k))}{\theta \tilde h(z_k)} = O\left( \frac{\log z_k}{z_k^{\rho-1} L(z_k)} \right) \to 0
\]
since $\rho >1,$ contradicting Eq.~\ref{eq:theta}.

If instead $\tilde h\in \Gamma,$ we again have $\tilde h(z)/z^m \to \infty$ for any $m>0.$ This immediately implies that
\[
\frac{\tilde h(z_k/(1+\theta))}{z_k \log \tilde h(z_k)} \to \infty,
\]
again contradicting Eq.~\ref{eq:theta}.

Now, by monotonicity, Eq.~\ref{eq:monotone} again holds, and since $\log((x_k + x_{k+1}) h(x_k) - 1) = \log(2x_k h(x_k)) + o(1),$ combining with (\ref{eq:hazardrecurrence}) again shows that
\[
\Delta_k = \frac{\log(2x_k h(x_k))+o(1)}{h(x_k)}
\]
as desired (in fact, the factor of $x_k$ in the logarithm can be neglected in this case since $x_k \to 1$). The proof of Eq.~\ref{eq:int} is essentially unchanged from the corresponding part of the proof of Theorem~\ref{thm:asymptotics}.
\end{proof}
\section{Proof of Proposition~\ref{prop:bounded_powerlaw}}\label{sec:prop5}
The proof is structurally similar to that of Proposition~\ref{prop:pareto}: we establish that $G$ is regularly varying, and then use this to derive a recurrence for the logarithm of $1-x_k$, up to $O(1)$ errors. More precise control on the higher-order asymptotics of $h$ suffices to shrink this error to $o(1)$ and refine our estimate of the $x_k.$
\begin{proof}
Let $\epsilon = 1-x$ and $\epsilon_k = 1-x_k.$ We have $d \log G(1-\epsilon)/d\epsilon = h(1-\epsilon).$ Write $h(1-\epsilon) = c/\epsilon + r(\epsilon)$ with $r(\epsilon) = o(1/\epsilon)$ as $\epsilon \downarrow 0.$ Then, integrating, we have
\[
\log G(1-\epsilon) = \log K + c\log \epsilon + \int^\epsilon r(t) \, dt.
\]
for some $K>0$
Hence
\[
G(1-\epsilon) = \epsilon^c \ell(1/\epsilon)
\]
with 
$\ell (t) := K \exp(\int^{1/t} r(u)\, du ).$ We claim that $\ell$ is slowly varying. To see this, note that for any $\lambda >0,$
\[
\log \frac{\ell(\lambda t)}{\ell(t)} = \int_{1/t}^{1/\lambda t} r(u) \, du.
\]
But $r(u) = o(1/u),$ so for any $\eta>0,$ $|r(u)| \le \eta/u$ for small enough $u,$ so
\[
\left|\int_{1/t}^{1/\lambda t} r(u) \, du \right| \le \eta |\log \lambda|.
\]
Taking $t \to \infty$ and $\eta \to 0$ shows that
\[
\log \frac{\ell(\lambda t)}{\ell(t)} \to 0.
\]

We then have 
\[
\frac{G(x_{k-1})}{G(x_k)} = \left(\frac{\epsilon_{k-1} }{\epsilon_k}\right)^c \frac{\ell(1/\epsilon_{k-1})}{\ell(1/\epsilon_k)}.
\]
and so, using $x_k,x_{k+1} \to 1$ and $h(x)\sim c/(1-x),$ (\ref{eq:hazardrecurrence}) gives
\[
\left(\frac{\epsilon_{k-1} }{\epsilon_k}\right)^c \frac{\ell(1/\epsilon_{k-1})}{\ell(1/\epsilon_k)} = \frac{2c}{\epsilon_k}(1+o(1)).
\]
The Potter bound then gives, for any $\eta>0,$
\[
(c-\eta) \log \frac{\epsilon_{k-1} }{\epsilon_k} + O(1) \le \log \frac{2c}{\epsilon_k} \le (c+\eta) \log \frac{\epsilon_{k-1} }{\epsilon_k} + O(1),
\]
so $\eta \to 0$ shows that
\[
\log \frac{2c}{\epsilon_k} = c\log \frac{\epsilon_{k-1} }{\epsilon_k} + O(1),
\]
or, writing $L_k = -\log \epsilon_k,$
\[
L_k = \frac{c}{c-1} L_{k-1} + O(1).
\]
It follows that 
\[ L_k \asymp \left(\frac{c}{c-1}\right)^k  \]
for some $A>0,$ proving the first claim.

On the other hand, if $r = O(\epsilon^{\delta-1}),$ then we can refine our estimate of $G$ further to
\[
G(1-\epsilon) = K \epsilon^c (1+O(\epsilon^\delta)).
\]
Now (\ref{eq:hazardrecurrence}) gives
\[
\left(\frac{\epsilon_{k-1}}{\epsilon_k}\right)^c = \frac{2c}{\epsilon_k}(1+o(1)),
\]
whence
\[
L_k = \frac{c}{c-1} L_{k-1} + \frac{\log 2c}{1-c} + o(1).
\]
It follows that there exists $A>0$ such that
\[
L_k = A\left(\frac{c}{c-1} \right)^k -\log 2c +o(1),
\]
proving the second claim.
\end{proof}

\section{Discussion}\label{sec:discussion}
Taken together, Theorems~\ref{thm:increments}--\ref{thm:bounded} and Propositions~\ref{prop:pareto}--\ref{prop:bounded_powerlaw} classify and quantify the asymptotics of the solutions to the symmetric LSP for positive, monotonic target densities on both compact intervals and the real line. This solves a major component of the problem for most well-behaved densities.

Two important classes have not been treated carefully: densities with strong, persistent oscillations throughout the tail (or as $x$ approaches the boundary in the case of compact intervals), and densities on compact intervals which approach 0 like a slowly varying function. While we do not attempt to study such cases presently, it seems plausible that a similar approach to the one taken here could provide control on the asymptotics in the case of oscillating densities, provided that $h$ has known upper and lower bounds. 

An interesting direction for future research would be to try to use the rigorous asymptotics as a large-$k$ constraint in a computational algorithm for the optimal search strategy. Early efforts by the author in this direction have been encouraging but have thus far failed to produce convincing evidence of robust convergence properties.

\section*{Acknowledgments}The author thanks Antonio Celani for some early discussions which helped inspire this work. This work was supported by the European Research Council under the grant RIDING (No.\ 101002724), by the Air Force Office of Scientific Research (grant FA8655-20-1-7028), and the National Institute of Health under grant R01DC018789. The author discloses the use of a large language model to help produce some of the technical details of the proofs.
\bibliographystyle{acm}
\bibliography{references}
\end{document}